\DeclareMathOperator{\diam}{diam}
\DeclareMathOperator{\supp}{supp}
\newtheorem{teor}{Theorem}[section]
\newtheorem{lem}[teor]{Lemma}
\newtheorem{prop}[teor]{Proposition}
\newtheorem{defn}[teor]{Definition}
\def\R{\mathbb{R}}
\def\Z{\mathbb{Z}}
\def\N{\mathbb{N}}
\def\LL{\mathcal{L}}
\newcommand{\HH}{\mathcal{H}}
\newcommand{\PP}{\mathcal{P}}
\newcommand{\TT}{\mathcal{T}}
\renewcommand{\S}{\mathcal{S}}
\newcommand{\YY}{\mathcal{Y}}
\newcommand{\GG}{\mathcal{G}}
\newcommand{\minus}{\smallsetminus}
\newcommand{\eps}{\varepsilon}
\newcommand{\vacio}{\varnothing}
\newcommand{\claus}{\overline}
\newcommand{\dmu}{\,d\mu}
\DeclareMathOperator{\INT}{int}
\DeclareMathOperator{\Jac}{Jac}
\providecommand{\abs}[1]{\lvert#1\rvert}
\providecommand{\norm}[1]{\lVert#1\rVert}
\newcommand{\id}{\mathbf{id}}
\newcommand\inner[2]{\langle #1, #2 \rangle}
\def\MRbibitem{\@ifnextchar[\my@lbibitem\my@bibitem}
\def\mybiblabel#1#2{\@biblabel{{\hyperref{http://www.ams.org/mathscinet-getitem?mr=#1}{}{}{#2}}}}
\def\myhyperanchor#1{\Hy@raisedlink{\hyper@anchorstart{cite.#1}\hyper@anchorend}}
\def\my@lbibitem[#1]#2#3#4\par{%
  \item[\mybiblabel{#2}{#1}\myhyperanchor{#3}\hfill]#4%
  \@ifundefined{ifbackrefparscan}{}{\BR@backref{#3}}%
  \if@filesw{\let\protect\noexpand\immediate
    \write\@auxout{\string\bibcite{#3}{#1}}}\fi\ignorespaces%
}
\def\my@bibitem#1#2#3\par{%
  \refstepcounter\@listctr
  \item[\mybiblabel{#1}{\the\value\@listctr}\myhyperanchor{#2}\hfill]#3%
  \@ifundefined{ifbackrefparscan}{}{\BR@backref{#2}}%
  \if@filesw\immediate\write\@auxout
    {\string\bibcite{#2}{\the\value\@listctr}}\fi\ignorespaces%
}
\title[Decay of Correlations for some NUH Attractors]{Decay of Correlations for some Non-Uniformly Hyperbolic Attractors}
\author{Sebastian Burgos }
\date{}
\begin{document}

\begin{abstract}
We study the decay of correlations for certain dynamical systems with non-uniformly hyperbolic attractors, which natural invariant measure is the Sinai-Ruelle-Bowen (SRB) measure. The system $g$ that we consider is produced by applying the slow-down procedure to a uniformly hyperbolic diffeomorphism $f$ with an attractor. Under certain assumptions on the map $f$ and the slow-down neighborhood, we show that the map $g$ admits polynomial upper and lower bounds on correlations with respect to its SRB measure and the class of H\"older continuous observables. Our results apply to the Smale-Williams solenoid, as well as its sufficiently small perturbations.
\end{abstract}

\maketitle

\section{Introduction}

In smooth dynamical systems, a classical problem known as \textit{smooth realization problem} asks if given a compact smooth manifold $M$, one can construct a volume preserving diffeomorphism $f$ with prescribed ergodic properties. Katok \cite{K} constructed an example (called the Katok map) of an area preserving $C^{\infty}$ Bernoulli diffeomorphism with nonzero Lyapunov exponents first on the two dimensional torus, then transferred the construction to the sphere, to the unit disk, and finally to any surface. Brin, Feldman and Katok \cite{BFK} showed that every manifold of dimension greater than one carries a Bernoulli diffeomorphism, then Brin \cite{B} extended this result by constructing a volume preserving $C^{\infty}$ Bernoulli diffeomorphism on any manifold of dimension at least 5, with all but one nonzero Lyapunov exponents. Finally, Dolgopyat and Pesin \cite{DP} extended this result by constructing a Bernoulli diffeomorphism with nonzero Lyapunov exponents on any smooth compact Riemannian manifold of dimension at least 2. Another ergodic property of interest is \textit{mixing}, which amounts to have correlations decaying to zero. Thus, one can ask about the speed at which these correlations decay. Regarding this question Pesin, Senti and Shahidi \cite{PSS} showed that any smooth compact connected and oriented surface admits an area preserving $C^{1+\beta}$ diffeomorphism with nonzero Lyapunov exponents which is Bernoulli and has polynomial decay of correlations.

The constructions mentioned above involve the so-called \textit{slow-down procedure}. Roug-hly speaking, it goes as follows: consider a uniformly hyperbolic map $f$ such that around a fixed point $p$, it is the time-one map of a flow given by a linear vector field. In a very small neighborhood of $p$, we perturb the system of equations in order to slow down trajectories and break the uniform hyperbolicity of the map, producing a new map $g$ which coincides with the map $f$ outside of a neighborhood of $p$, and is non-uniformly hyperbolic. We study the decay of correlations of dissipative systems, i.e. maps with non-uniformly hyperbolic attractors, which are produced via the slow-down procedure. This is the main difference between our setting and all the examples mentioned above (conservative systems). In our setting the natural invariant measure is the Sinai-Ruelle-Bowen (SRB) measure (See Section \ref{defns}). We provide an example in dimension 3: the Smale-Williams solenoid.

To the best of our knowledge, there are not many results involving statistical properties of SRB measures in genuine attractors. It is known that any $C^2$ transitive Anosov diffeomorphism $f$ admits a unique invariant measure $\mu$ which has absolutely continuous conditional measures on unstable manifolds, and the system $(f,\mu)$ has exponential decay of correlations with respect to H\"older observables (\cite{Sinai}, \cite{Ruelle}, \cite{Bowen}). Benedicks and Young discussed in \cite{BY} the existence of SRB measures with exponential decay of correlations for H\'enon maps. Later Castro discussed in \cite{C} the same question for some partially hyperbolic diffeomorphisms. Hatomoto studied in \cite{H} a $C^{1+\alpha}$ partially hyperbolic diffeomorphism which restriction on one dimensional center unstable direction behaves as Manneville-Pomeau map, and showed that admits a unique ergodic SRB measure with polynomial upper bound on correlations with respect to H\"older observables.

We now present the setting and the main results. Starting with a $C^{1+\alpha}$ diffeomorphism $f\colon M\to M$ of a smooth manifold with a topological attractor $\Lambda$, we apply the slow-down procedure to obtain a new map $g$ with its own attractor $\Lambda_g$ (see Section \ref{Setting} for more details). Climenhaga, Dolgopyat and Pesin \cite{CDP} studied this setting and showed that the map $g$ has an SRB measure $\mu$ supported on $\Lambda_g$. Zelerowicz \cite{Z} studied thermodynamic formalism for the slow-down map $g$ with the family of potentials $\varphi_t(x):=-t\log\abs{dg|_{E^u(x)}}$, and showed that the SRB measure of $g$ is unique, and it is precisely the equilibrium measure of the potential $\varphi_1$. Moreover, under an additional condition on $f$, there is $t_0<0$ such that for $t\in(t_0,1)$ the potential $\varphi_t$ has a unique equilibrium measure $\mu_t$, which has exponential decay of correlations. When $t=1$ the potential $\varphi_1$ has two ergodic equilibrium measures, the unique SRB measure and the Dirac measure at the fixed point $p$. A natural question to ask is when we consider the SRB measure, whether we still have exponential decay of correlations or it drops to some slower decay. We use the setting and some ideas and techniques from \cite{CDP}, \cite{Z} and \cite{PSS} to prove the main results of the paper.

\begin{teor} Let $f$ be a $C^{1+\alpha}$ diffeomorphism of a compact manifold $M$ with a uniformly hyperbolic attractor $\Lambda$, satisfying (A1)--(A5) (see Section \ref{Setting}). Then for sufficiently small slow-down neighborhood, the slow-down map $g\colon\Lambda_g\to\Lambda_g$ admits polynomial upper bound on correlations with respect to its SRB measure and the class of H\"older continuous observables on $\Lambda_g$.
\end{teor}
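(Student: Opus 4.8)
The plan is to reduce the problem to a tail estimate for return times via a Young tower, and then invoke Young's theorem on polynomial decay of correlations. Concretely, I would construct a reference set $\Lambda^*$ for $g$ carrying a hyperbolic product structure, equip it with a return (inducing) scheme, verify the Gibbs--Markov--Young axioms, estimate the tail $\mu(R>n)$ of the return time $R$, and conclude that $\abs{\mathrm{Cor}_n(\varphi,\psi)}\le C\,n^{-(a-1)}$ for H\"older observables whenever the tail satisfies $\mu(R>n)\le C'\,n^{-a}$ with $a>1$. Since only the upper bound is asserted here, the tail estimate is all that is needed.

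First I would build the tower, exploiting the fact that $g$ coincides with $f$ outside the slow-down neighborhood $U$ of $p$ and that $f$ is uniformly hyperbolic. I would take $\Lambda^*$ to be a rectangle (a product of local unstable leaves through a Cantor set in the stable direction) inherited from a Markov partition of $f$, positioned at a definite distance from $p$. On $\Lambda^*$ the map $g$ is still uniformly hyperbolic, so the contraction/expansion estimates, the bounded-distortion property, and the absolute continuity of the stable holonomies follow from the $C^{1+\alpha}$ regularity and uniform hyperbolicity exactly as in the classical axiom A theory. The Markov property of the return map and aperiodicity of the return times are then arranged using the Markov structure together with transitivity of the attractor, as in \cite{CDP} and \cite{Z}.

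The crux is the return-time tail estimate. The only mechanism producing large return times is excursions into $U$: an orbit entering $U$ very close to $W^s(p)$ is slowed down and lingers for a long time before the degenerate hyperbolicity ejects it. Using the normal form of $g$ near $p$ supplied by (A1)--(A5), I would quantify the escape time $T(r)$ of a point entering $U$ at unstable-distance $r$ from $W^s(p)$; the order of vanishing of the slow-down vector field at $p$ makes $T(r)$ grow polynomially, $T(r)\sim r^{-\theta}$ for an explicit $\theta$. I would then estimate the measure, with respect to the reference measure on unstable leaves, of the thin ``tongues'' of points entering $U$ within the critical distance corresponding to return time $n$, controlling distortion along the excursion, and push this forward to the SRB measure. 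This yields $\mu(R>n)\le C'\,n^{-a}$ with exponent $a$ determined by $\theta$ and the local unstable expansion.

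The main obstacle is precisely this quantitative analysis in the degenerate region: bounded distortion for the induced map must be maintained even though orbits spend arbitrarily long times where the hyperbolicity is nearly neutral and the uniform estimates of the axiom A theory fail. Controlling the accumulated distortion along long excursions near $p$, estimating the measure of the long-return tongues sharply, and verifying that the resulting exponent satisfies $a>1$ (so that $R$ is integrable and Young's machinery applies, giving the stated polynomial upper bound) is the heart of the argument, and is exactly where the specific form of the slow-down encoded in (A1)--(A5) enters.
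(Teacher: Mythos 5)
Your overall strategy coincides with the paper's: realize $g$ as a Young diffeomorphism with base a Markov rectangle $P$ away from the slow-down ball (this structure is imported from \cite{Z}), reduce polynomial decay to a tail estimate $\eta(\tau>n)\lesssim n^{-\delta}$ with $\delta>1$ via the abstract results of \cite{SZ}, \cite{G}, \cite{S} (Proposition \ref{bestprop}), and obtain the exponent from the polynomial slow-down of unstable expansion near $p$. The comparison between the reference leaf measure and $\eta$, the distortion control, and the aperiodicity are all handled as you indicate.

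However, there is a genuine gap in your tail estimate. You model $\{\tau>n\}$ as a union of ``tongues'' of points making \emph{one} long excursion into the slow-down neighborhood, with escape time governed by the entry distance to $W^s(p)$. But a component of $\{\tau=N\}$ may make many excursions of moderate length separated by long stretches outside the ball, and the number of such components grows \emph{exponentially} in the time $N-k$ spent outside the ball (it is controlled by the complexity of the symbolic model of $g$, roughly $e^{h(N-k)}$ with $h<h_{\mathrm{top}}(g)$). Each individual component is exponentially short, $\ell(\sigma)\lesssim \nu^{-(N-k)}k^{-\gamma_2}$, but summing over components only works if the contraction beats the entropy, i.e.\ if $\log\nu>h$. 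The paper proves this in Lemma \ref{lemlognu}, and this is precisely where assumption (A5) enters: $f$ is $C^1$-close to a uniformly expanding $f_0$ whose SRB measure is the measure of maximal entropy, so Pesin's entropy formula gives $\log\lambda=h_{\mathrm{top}}(f_0)>h$ and the inequality persists for nearby $f$. Your sketch never confronts this entropy-versus-expansion competition, nor the combinatorial count of itineraries (Lemma \ref{lemcard}, which uses the specification property of the shift and the lower bound $Q$ on the length of excursions outside the ball to absorb the binomial factors into $e^{\varepsilon_0(N-k)}$). Without these two ingredients the sum over components of $\{\tau>n\}$ does not close, and the single-excursion heuristic alone does not yield the claimed polynomial bound.
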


\begin{teor}
Under the assumptions of the previous theorem, assume in addition that $\alpha\in(0,1/2)$, $\gamma>0$ is fixed and  $\beta>\gamma$ in (A4) (see Section \ref{Setting}) is sufficiently large. Then for sufficiently small slow-down neighborhood, the slow-down map $g\colon\Lambda_g\to\Lambda_g$ admits polynomial lower bound on correlations with respect to its SRB measure and a subclass of the H\"older continuous observables on $\Lambda_g$.
\end{teor}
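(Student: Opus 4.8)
The plan is to promote the Young-tower analysis behind the upper bound (Theorem 1) to a precise asymptotic by means of operator renewal theory, following the strategy of Sarig and Gou\"ezel as adapted in \cite{PSS}. From the proof of the previous theorem we already have a Gibbs--Markov--Young tower $(\Delta,F)$ over a base $\Delta_0\subset\Lambda_g$, with first-return time $R\colon\Delta_0\to\N$, realizing $(g,\mu)$ as a factor of the tower map and satisfying the upper tail bound $\mu(R>n)\le C n^{-a}$, where $a$ is the tail exponent (equal to $1/\alpha$ in the present normalization). The first thing I would record is that the hypothesis $\alpha\in(0,1/2)$ places us in the regime $a>2$, which is exactly the range in which the operator renewal theorem produces a clean first-order asymptotic (finite second moment of $R$), while $\beta>\gamma$ large in (A4) makes the slow-down profile sharp enough to control the error terms.

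The first substantive step is to upgrade the one-sided tail estimate to a matching two-sided bound
\[
c\, n^{-a}\le \mu(R>n)\le C\, n^{-a},\qquad a=\tfrac1\alpha .
\]
I would obtain the lower bound by a direct computation of the escape time from the slow-down neighborhood of the fixed point $p$. Writing the unstable dynamics near $p$ in the coordinates supplied by (A4), a point entering with unstable coordinate $u_0$ needs $\sim u_0^{-\alpha}$ iterates to leave, so $\{R>n\}$ contains, up to bounded distortion along unstable leaves, the set of points with $u_0\lesssim n^{-1/\alpha}$; since $\mu$ has absolutely continuous conditional measures on the one-dimensional unstable leaves with densities bounded away from $0$, this set has $\mu$-measure $\gtrsim n^{-1/\alpha}$. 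The assumption that $\beta$ is large and $\gamma$ fixed is used precisely to guarantee that the higher-order terms of the slow-down vector field do not spoil the leading asymptotic $u_0^{-\alpha}$ for the escape time, so that the upper and lower tail exponents coincide.

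With sharp tails in hand I would invoke operator renewal theory. Let $L$ be the transfer operator of $F$ acting on the Banach space of densities that are H\"older (or of bounded variation) along unstable leaves already used for the upper bound. Decomposing $L$ by first-return time gives operators $L_n$ with generating function $\widehat L(z)=\sum_{n\ge1}L_n z^n$, and Sarig's renewal equation relates the iterates $L^n$ to the tail operators $T_n=\sum_{m>n}L_m$. Gou\"ezel's operator renewal theorem then yields, for observables $\phi,\psi$ lifted from $\Lambda_g$,
\[
\int_{\Lambda_g}\phi\,(\psi\circ g^n)\dmu-\int\phi\dmu\int\psi\dmu
= c_0\Big(\int\phi\dmu\Big)\Big(\int\psi\dmu\Big)\sum_{m>n}\mu(R>m)+o\!\Big(\sum_{m>n}\mu(R>m)\Big),
\]
with $c_0>0$ a constant depending only on the tower. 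Using the sharp tail, $\sum_{m>n}\mu(R>m)\sim \frac{c}{a-1}\,n^{-(a-1)}=\frac{c}{a-1}\,n^{-(1-\alpha)/\alpha}$, so for any pair of lifted H\"older observables with $\int\phi\dmu\neq0$ and $\int\psi\dmu\neq0$ the correlations are bounded below by a positive multiple of $n^{-(1-\alpha)/\alpha}$ for all large $n$; this distinguished set is the subclass named in the statement. To apply the theorem I must verify its hypotheses: a spectral gap for the first-return (induced) transfer operator --- available from the uniform hyperbolicity of $f$ away from $p$, as in the upper-bound proof --- together with the \emph{aperiodicity} condition that $I-\widehat L(z)$ is invertible for every $z$ on the unit circle $|z|=1$ with $z\neq1$, which I would derive from the topological mixing of $g$ on $\Lambda_g$, for instance by exhibiting returns to $\Delta_0$ of coprime lengths.

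The main obstacle I anticipate is twofold, and both parts are renewal-theoretic. First, making the lower tail bound genuinely sharp in the non-uniformly hyperbolic, higher-dimensional setting requires tight distortion control on unstable leaves inside the slow-down region, where hyperbolicity degenerates; this is where the quantitative form of (A4) and the smallness of the slow-down neighborhood are essential, and it is the place where $\alpha<1/2$ (equivalently $a>2$) cannot be relaxed without changing the form of the asymptotic. Second, verifying the functional-analytic hypotheses of the operator renewal theorem --- the continuity and required differentiability of $z\mapsto\widehat L(z)$ up to the unit circle, and the aperiodicity --- demands that the H\"older lifts of the observables land in the correct anisotropic function space and that the leading coefficient $(\int\phi\dmu)(\int\psi\dmu)$ be identified as the genuine first-order term rather than being cancelled by lower-order contributions; pinning down exactly which H\"older observables avoid such cancellation is what determines the admissible subclass.
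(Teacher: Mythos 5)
Your high-level architecture matches the paper's: both routes reduce the theorem to a renewal-type asymptotic $C^{\mu}_n(h_1,h_2)=\mathrm{const}\cdot\sum_{m>n}\eta(\tau>m)+O(n^{-\delta})$ for a distinguished subclass of observables (the paper imports this as Proposition \ref{bestprop} from \cite{SZ}, \cite{G}, \cite{S} rather than re-running the operator renewal argument), and then try to bound the tail of the return time from below. The gap is in your central quantitative claim, the matching two-sided bound $c\,n^{-1/\alpha}\le\mu(R>n)\le C\,n^{-1/\alpha}$. The escape-time asymptotic $T\sim u_0^{-\alpha}$ that you use to justify the lower bound is valid only for points entering the slow-down region essentially on the local unstable manifold of $p$. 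But the points of the base $P$ with long return times enter $Y=B(0,r_1)$ at its boundary, with stable coordinate of size $\asymp r_1$ and tiny unstable coordinate $u_0$. Since the perturbed flow is a time change of the linear flow $\dot x=Ax$, such an orbit first contracts toward $p$ and its closest approach has norm $\asymp u_0^{\beta/(\beta+\gamma)}r_1^{\gamma/(\beta+\gamma)}\gg u_0$; integrating $\|x\|^{-\alpha}$ along the orbit gives an escape time $T\asymp u_0^{-\alpha\beta/(\beta+\gamma)}$, not $u_0^{-\alpha}$. Hence $\{T>n\}$ corresponds to $u_0\lesssim n^{-(\beta+\gamma)/(\alpha\beta)}$, and the lower tail exponent strictly exceeds $1/\alpha$ for every finite $\beta$; taking $\beta$ large or $r_1$ small makes the exponents close but never equal. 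So the matching bound you assert is false as stated, and "higher-order terms of the vector field" are not the obstruction.

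This mismatch between the upper and lower tail exponents is precisely the heart of the paper's proof, and your proposal does not confront it. The paper proves a lower tail bound $\ell(\tau>n)\ge C_{11}n^{-(\gamma_1-1)}$ with an exponent $\gamma_1=(1+\tfrac1\alpha)\bigl(\tfrac{2\gamma}{\beta-\gamma}+2^{\alpha/2}\bigr)$ coming from a two-sided distortion estimate for unstable curves crossing the slow-down ball (Lemmas \ref{newineq} and \ref{lemalargos}), applied to special $s$-subsets that spend all but a bounded number of iterates inside $Y$ (Lemma \ref{Lema7.1PSS}); it then must check that the resulting main term $n^{-(\gamma_1-2)}$ still dominates the renewal error term $n^{-(\gamma_2-1)}=n^{-1/\alpha}$. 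This is exactly where $\alpha\in(0,1/2)$ and $\beta$ large are used: the condition $\gamma_1-2<\gamma_2-1$ reduces to $(\alpha+1)\bigl(\tfrac{2\gamma}{\beta-\gamma}+2^{\alpha/2}\bigr)<2\alpha+1$, which holds for $\beta$ large because $(\alpha+1)2^{\alpha/2}<2\alpha+1$ on $(0,1/2)$. Without a correct (even if non-sharp) lower tail exponent and this comparison against the error term, your argument cannot conclude; as written it would also yield the wrong polynomial rate, since the paper's lower bound on correlations is $n^{-(\gamma_1-2)}$, which does not coincide with the upper rate $n^{-(1/\alpha-1)}$.
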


A very useful technique to establish decay of correlations is called an \textit{inducing scheme} (see \cite{PSZ1} and \cite{SZ}), which has applications to so-called \textit{Young diffeomorphisms} (see Section \ref{prelim} for more details). Pesin, Senti and Zhang \cite{PSZ} studied thermodynamic formalism for the Katok map, and showed that it is a Young diffeomorphism. Later, Pesin, Senti and Shahidi \cite{PSS} used this structure to establish polynomial decay of correlations for a class of maps depending on the surface. For the attractor setting, Zelerowicz \cite{Z} showed that the map $g$ is a Young diffeomorphism, and so we have the main tool to establish decay of correlations.

The paper is organized in the following way: in Section \ref{defns} we fix notation and definitions that we will use throughout the paper, in Section \ref{Setting} we explain the setting in which we will work and we state the main results in more detail, in Section \ref{solenoid} we provide an example in dimension 3 that fits in our setting, and finally Section \ref{prelim} and Section \ref{Proofs} consist of the proofs of the main results.

\textit{Acknowledgements.} I would like to thank my advisor, Yakov Pesin, for proposing the problem and for very useful comments and suggestions. I would like to thank Sebastian Pavez for many discussions about the solenoid example, and Eduardo Reyes for his valuable comments.

\section{Definitions and Notations}\label{defns}

\subsection{Decay of Correlations} Let $(X,T,\mu)$ be a measure preserving dynamical system. Let $\HH$ be a class of real valued functions on $X$. For $h_1,h_2\in\HH$ and $n\geq 1$, the sequence of correlations between $h_1$ and $h_2$ with respect to the system $(T,\mu)$ is defined by
$$C^{\mu}_n(h_1,h_2):=\int (h_1\circ T^n)h_2d\mu-\int h_1d\mu\int h_2d\mu.$$

We say that the system $(X,T,\mu)$ has \textit{exponential decay of correlations with respect to the class $\HH$} if there exists $\gamma>0$ such that for any $h_1,h_2\in\HH$ and $n\geq 1$,
$$\abs{C^{\mu}_n(h_1,h_2)}\leq Ce^{-n\gamma},$$
where $C=C(h_1,h_2)>0$ is a constant.\\

We say that the system $(X,T,\mu)$ has \textit{polynomial decay of correlations} (more precisely, \textit{polynomial upper bound on correlations}) with respect to the class $\HH$ if there exists $\gamma '>0$ such that for any $h_1,h_2\in\HH$ and any $n\geq 1$,
$$\abs{C^{\mu}_n(h_1,h_2)}\leq C'n^{-\gamma'},$$
where $C'=C'(h_1,h_2)>0$ is a constant.\\

Similarly, we say that the system $(X,T,\mu)$ has \textit{lower bound on correlations} with respect to the class $\HH$ if there exists $\gamma ''>0$ such that for any $h_1,h_2\in\HH$ and any $n\geq 1$,
$$\abs{C^{\mu}_n(h_1,h_2)}\geq C''n^{-\gamma''},$$
where $C''=C''(h_1,h_2)>0$ is a constant.

\subsection{Young diffeomorphisms.}\label{young} We define Young diffeomorphisms following \cite{Z}. Let $f\colon M\to M$ be a $C^{1+\alpha}$ diffeomorphism of a compact smooth Riemannian manifold $M$, and assume that $\Lambda\subset M$ is a topological attractor for $f$. 

An embedded disk $V\subset M$ is called an \textit{unstable disk} (respectively \textit{stable disk}) if for all $x,y\in V$ we have $d(f^{-n}(x),f^{-n}(y))\to 0$ (respectively $d(f^n(x),f^n(y))\to 0$) as $n\to\infty$. A collection of embedded disks $\Gamma^u=\{V^u\}$ is called a \textit{continuous family of $C^1$ unstable disks} if there is a homeomorphism $\Theta\colon K^s\times D^u\to\bigcup V^u$ satisfying:
\begin{itemize}
    \item $K^s\subset M$ is a Borel subset and $D^u\subset\R^d$ is the closed unit disk for some $d<\dim M$;
    \item $x\mapsto\Theta|_{\{x\}\times D^u}$ is a continuous map from $K^s$ to the space of $C^1$ embeddings of $D^u$ into $M$ which can be extended continuously to a map of the closure $\claus{K^s}$;
    \item $V^u=\Theta(\{x\}\times D^u)$ is an unstable disk.
\end{itemize}

A \textit{continuous family of stable disks} can be defined analogously.\\

We say that a set $P\subset\Lambda$ has \textit{hyperbolic product structure} if there is a continuous family $\Gamma^u=\{V^u\}$ of unstable disks $V^u$ and a continuous family $\Gamma^s=\{V^s\}$ of stable disks such that
\begin{itemize}
    \item $\dim V^s+\dim V^u=\dim M$;
    \item the $V^u$-disks are transversal to the $V^s$-disks by an angle uniformly bounded away from zero;
    \item each $V^u$-disk intersects each $V^s$-disk at exactly one point;
    \item $P=(\bigcup V^u)\cap(\bigcup V^s)$.
\end{itemize}

A subset $P_0\subset P$ is called an \textit{s-subset} if it has hyperbolic product structure and is defined by the same family $\Gamma^u$ of unstable disks as $P$ and a continuous subfamily $\Gamma_0^s\subset\Gamma^s$ of stable disks. A \textit{u-subset} is defined analogously.\\

The map $f$ is called a \textit{Young diffeomorphisms} if it satisfies the following conditions:

\begin{enumerate}
    \item[(C0)] There exists $P\subset\Lambda$ with hyperbolic product structure.
    \item[(C1)] There exists a countable collection of continuous subfamilies $\{\Gamma^s_i\}_{i\in\N}$ of stable disks and positive integers $\tau_i$, $i\in\N$ such that the $s$-subsets
    $$P^s_i:=\bigcup_{V\in\Gamma_i^s}(V\cap P)$$
    are pairwise disjoint and satisfy
    \begin{enumerate}
        \item Invariance: for every $x\in P_i^s$
        $$f^{\tau_i}(V^s(x))\subset V^s(f^{\tau_i(x)}),\qquad f^{\tau_i}(V^u(x))\supset V^u(f^{\tau_i(x)}),$$
        where $V^{u,s}(x)$ denotes the (un)stable disk containing $x$;
        \item Markov Property: the set $P^u_s=f^{\tau_i}(P^s_i)$ is a $u$-subset of $P$ such that for all $x\in P^s_i$
        $$f^{-\tau_i}(V^s(f^{\tau_i}(x))\cap P^u_i)=V^s(x)\cap P,$$
        $$f^{\tau_i}(V^u(x)\cap P^s_i)=V^u(f^{\tau_i}(x))\cap P.$$
        \end{enumerate}
        For any $x\in P^s_i$ define the \textit{inducing time} by $\tau(x):=\tau_i$ and the \textit{induced map} $F\colon\bigcup_{i\in\N}P^s_i\to P$ by
        $$F|_{P_i^s}:=f^{\tau_i}|_{P_i^s}.$$
    
    \item[(C2)] There is $a\in(0,1)$ such that for any $i\in\N$ we have:
    \begin{enumerate}
        \item For any $x\in P_i^s$ and $y\in V^s(x)$,
        $$d(F(x),F(y))\leq ad(x,y);$$
        \item For any $x\in P_i^s$ and $y\in V^u(x)\cap P_i^s$,
        $$d(x,y)\leq ad(F(x),F(y)).$$
    \end{enumerate}
    
    For $x\in P$ denote
    $$E^u(f^k(x)):=T_{f^k(x)}f^k(V^u(x))=df_x^k(T_xV^u(x))$$
    and denote the restriction of the Jacobian of $f$ to $E^u$ by $J^uf(x):=\det df|_{E^u(x)}$. The definition of $J^uF(x)$ is analogous.
    
    \item[(C3)] There exist $c>0$ and $0<\beta<1$ such that:
    \begin{enumerate}
        \item[(a)] For all $n\geq 0$, $x\in F^{-n}(\bigcup_{i\in\N}P^s_i)$ and $y\in V^s(x)$ we have
        $$\log\frac{J^uF(F^nx)}{J^uF(F^ny)}\leq c\beta^n;$$
        \item[(b)] For any $i_0,\ldots,i_n\in\N$, $F^k(x),F^k(y)\in P^s_{i_k}$ for $0\leq k\leq n$ and $y\in V^u(x)$ we have
        $$\log\frac{J^uF(F^{n-k}x)}{J^uF(F^{n-k}y)}\leq c\beta^k.$$
    \end{enumerate}
    
    Let $m_{V^u}$ be the leaf volume on $V^u$.
    \item[(C4)] For every $V^u\in\Gamma^u$ one has
    $$m_{V^u}((P\minus\cup P^s_i)\cap V^u)=0\quad\text{ and }\quad m_{V^u}(V^u\cap P)>0.$$
    \item[(C5)] There exists $V^u\in\Gamma^u$ such that
    $$\sum_{i=1}^{\infty}\tau_i m_{V^u}(P^s_i)<\infty.$$
    
    Define
    $$scl(P^s_i):=\bigcup_{x\in\claus{P^s_i\cap V^u}}V^s(x)\cap P.$$
    
    One can easily see that the definition of $scl(P^s_i)$ does not depend on the choice of $V^u$.
    \item[(C6)] The set $\bigcup_{i\in\N}(scl(P^s_i)\minus P^s_i)$ supports no invariant measure that gives positive weight to any set which is open in the induced topology of $\Lambda$.
\end{enumerate}

\subsection{Hyperbolic and SRB measures}

Let $g$ be a $C^{1+\alpha}$ diffeomorphism of a compact smooth manifold $M$. A $g$-invariant measure $\mu$ is called \textit{hyperbolic} if $g$ has nonzero Lyapunov exponents $\mu$-almost everywhere.

For a regular set $Y_{\ell}$ (see \cite{BP1} or \cite{BP2} for the definition) of positive $\mu$-measure and sufficiently small $r>0$, define $Q_{\ell}(x)=\bigcup_{w\in Y_{\ell}\cap B(x,r)}V^u(w)$ for every $x\in Y_{\ell}$, where $V^u(w)$ is the local unstable manifold through $w$. Let $\xi(x)$ be the partition of $Q_{\ell}(x)$ by these manifolds, and let $\mu^u(w)$ be the conditional measure on $V^u(w)$ generated by $\mu$ with respect to $\xi(x)$.

\begin{defn}
A hyperbolic invariant measure $\mu$ is called an SRB measure if for any regular set $Y_{\ell}$ of positive measure and almost every $x\in Y_{\ell}$, $w\in Y_{\ell}\cap B(x,r)$, the conditional measure $\mu^u(w)$ is absolutely continuous with respect to the leaf volume $m_{V^u(w)}$ on $V^u(w)$.
\end{defn}
\section{Setting and Main Results}\label{Setting} 

We follow \cite[Section 1]{Z}. Let $M$ be a $d$-dimensional compact smooth Riemannian manifold and $U\subset M$ an open set. Let $f\colon U\to M$ be a $C^{1+\alpha}$ diffeomorphism onto its image with $\claus{f(U)}\subset U$, where $\alpha\in(0,1)$. Let $\Lambda:=\bigcap_{n=0}^{\infty}\claus{f^n(U)}$ be an attractor for $f$ with $NW(f)=\Lambda$, and assume the following:

\begin{enumerate}
    \item[(A1)] The map $f|_{\Lambda}$ is topologically transitive.

    \item[(A2)] The set $\Lambda$ is hyperbolic for $f$, i.e. for every $x\in\Lambda$, there is a splitting of the tangent space $T_xM=E^u_f(x)\oplus E^s_f(x)$ with $d_xf(E_f^u(x))=E^u_f(f(x))$ and $d_xf(E^s_f(x))=E^s_f(f(x))$ satisfying
    $$\norm{d_xf(v^u)}\geq\nu\norm{v^u}\quad\text{ for all }\quad v^u\in E^u_f(x),$$
    $$\norm{d_xf(v^s)}\leq\zeta\norm{v^s}\quad\text{ for all }\quad v^s\in E^s_f(x),$$
    for some $\nu>1$ and $\zeta <1$.
    
    \item[(A3)] For every $x\in\Lambda$, the unstable subspace $E^u(x)$ is one-dimensional;
    
    \item[(A4)] The map $f$ has a fixed point $p\in\Lambda$, and there is a neighborhood $V$ of $p$ in which $f$ is the time-one map of the flow generated by a linear vector field $\dot{x}=Ax$, where $A=A_u\oplus A_s$, $A_u=\gamma\id_u$ and $A_s=-\beta\id_s$, for some $\beta>\gamma>0$.
    
    \textit{Remark:} In a neighborhood of $V$ we identify $E^u(p)\oplus E^s(p)$ with $\R\oplus\R^{d-1}$ and $p$ with $0$.
    
    \item[(A5)] The map $f$ lies in a sufficiently small $C^1$-neighborhood of a map $f_0$ for which the SRB measure and the measure of maximal entropy coincide, and which has uniform expansion, i.e. there is $\lambda>1$ such that 
    $$\norm{d_xf_0v}=\lambda\norm{v}\qquad\text{ for }v\in E^u_{f_0}(x).$$ 
    
    \textit{Remark:} Since we are choosing $f$ sufficiently close to $f_0$, our assumptions for $f$ force us to assume that the unstable subspaces of $f_0$ are also one-dimensional.
\end{enumerate}

\textit{Remark:} Since $f|_{\Lambda}$ is topologically transitive and there is a fixed point, the spectral decomposition theorem implies that $f|_{\Lambda}$ is in fact topologically mixing.

We now apply the slow-down procedure to the map $f$. Fix $0<r_0<r_1<1$ so that $B(0,r_1)\subset V$. Let $\psi\colon[0,1]\to[0,1]$ be a $C^{1+\alpha}$ function satisfying
\begin{enumerate}
    \item[(i)] $\psi(r)=r^{\alpha}$ for every $r\leq r_0$;
    \item[(ii)] $\psi(r)=1$ for every $r\geq r_1$;
    \item[(iii)] $\psi'(r)> 0$.
\end{enumerate}

Let $\varphi_t$ be the flow generated by the system $\dot{x}=\psi(\norm{x})Ax$ on $V$. Define $g\colon U\to M$ by
$$g(x):=\left\{\begin{array}{lcc}
    \varphi_1(x) & , & x\in V\\
     f(x) & , & x\in U\minus V 
\end{array}.\right.$$

The map $g$ is of class $C^{1+\alpha}$, we have $g(U)=f(U)$ and in particular $\claus{g(U)}\subset U$. Then, the set $\Lambda_g:=\bigcap_{n=0}^{\infty}\claus{g^n(U)}$ is a topological attractor for $g$.

The map $g$ is called a \textit{slow-down map} of the map $f$. The same way Pesin, Senti and Shahidi \cite{PSS} studied decay of correlations for the Katok map, we study decay of correlations for the map $g$, also constructed via the slow down procedure. However, in our case we cannot work with volume as the natural measure, as the attractor may have zero volume. It was proven in \cite[Theorem 2.4]{CDP} that the map $g$ has an SRB measure $\mu$ supported on $\Lambda_g$. This is the natural measure with respect to which we study decay of correlations of the map $g$.

We now state the main results. Let $\HH$ be the class of real valued H\"older observables on $\Lambda_g$. For a sequence $\mathcal{Y}$ of nested subsets $Y_1\subset Y_2\subset \cdots\Lambda_g$, denote by $\mathcal{G}(\mathcal{Y})$ the set of $h\in\HH$ for which there is $k=k(h)$ with $\supp(h)\subset Y_k$. 

\begin{teor}\label{mainresult1} For $r_1>0$ sufficiently small, the map $g\colon\Lambda_g\to\Lambda_g$ admits a polynomial upper bound on correlations with respect to $\mu$ and the class $\HH$. More precisely: there is $s_1>0$ such that for any $h_1,h_2\in\HH$ we have 
    $$\abs{C^{\mu}_n(h_1,h_2)}\leq C_1n^{-s_1},$$
    where $C_1=C_1(h_1,h_2)>0$.

\end{teor}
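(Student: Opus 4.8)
The plan is to use the inducing scheme of \cite{Z}, which shows that the slow-down map $g$ is a Young diffeomorphism: under (A1)--(A5) there is a set $P\subset\Lambda_g$ with hyperbolic product structure, a countable family of $s$-subsets $P^s_i$ with inducing times $\tau_i$, and an induced map $F$ satisfying (C0)--(C6). For such schemes the abstract machinery of \cite{PSZ1} and \cite{SZ} reduces the bound on $\abs{C_n^\mu(h_1,h_2)}$ for H\"older observables to a single analytic input: a polynomial tail bound on the inducing time. Concretely, it suffices to produce $a>1$ with
$$m_{V^u}\bigl(\{x\in V^u\cap P:\tau(x)>n\}\bigr)\le Cn^{-a}\qquad\text{for }V^u\in\Gamma^u,$$
after which the bounded distortion estimate (C3) transfers the bound to the SRB measure $\mu$ and the abstract theorem yields $\abs{C_n^\mu(h_1,h_2)}\le C_1n^{-(a-1)}$; I would then set $s_1=a-1$.

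The heart of the argument is this tail estimate, and it is governed entirely by the behavior of $g$ near the fixed point $p$. A point of $P$ has large inducing time precisely when its forward $g$-orbit enters the slow-down neighborhood $V$ and passes close to $p$; because $r_1$ is small, uniform hyperbolicity of $f$ is untouched away from $V$, so all other returns to $P$ have uniformly bounded length. Near $p$ the map is the time-one map of $\dot x=\psi(\norm x)Ax$ with $\psi(r)=r^\alpha$ on $B(0,r_0)$. Writing $x=(u,s)\in\R\oplus\R^{d-1}$ with $u$ the one-dimensional unstable coordinate (A3), the unstable coordinate satisfies $\dot u=\gamma\,\psi(\norm x)\,u$. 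Since the stable coordinate contracts, escape from the core is controlled by the growth of $u$, and integrating the model equation $\dot u\asymp\gamma\abs u^{1+\alpha}$ from an entry value $\abs u=\delta$ up to size $\sim r_0$ gives escape time $T(\delta)\asymp\delta^{-\alpha}$. This is the Manneville--Pomeau-type neutrality created by the slow-down: $\{\tau>n\}$ corresponds, up to uniformly bounded factors, to orbits that enter $V$ within unstable-distance $\delta\lesssim n^{-1/\alpha}$ of the stable manifold $W^s(p)$.

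Since the unstable disks $V^u$ are one-dimensional (A3) and transverse to $W^s(p)$, the portion of a leaf $V^u$ landing within unstable-distance $\delta$ of $W^s(p)$ has leaf volume $\asymp\delta$; combining this with the escape-time estimate gives $m_{V^u}(\{\tau>n\})\asymp n^{-1/\alpha}$, so $a=1/\alpha$, which exceeds $1$ exactly because $\alpha\in(0,1)$. The same computation re-establishes the summability (C5), as $\sum_n n\cdot n^{-1/\alpha-1}<\infty$ for $\alpha<1$. Feeding $a=1/\alpha$ into the abstract estimate produces the polynomial upper bound with $s_1=\tfrac1\alpha-1=\tfrac{1-\alpha}{\alpha}$.

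The main obstacle is turning the heuristic escape-time and leaf-volume estimates into uniform, rigorous bounds. Three points require care. First, the passage geometry near the saddle $p$ must be controlled uniformly over the whole continuous family $\Gamma^u$ rather than on a single leaf; this rests on the transversality of $\Gamma^u$ to $W^s(p)$ and on the absolute continuity of the stable holonomy, so that the entry distribution of $u$ is comparable across leaves. Second, one must verify that only passages near $p$ contribute to the tail, i.e.\ that the slow-down is genuinely confined to $V$ and that the construction of $\{P^s_i,\tau_i\}$ in \cite{Z} assigns bounded inducing times to all orbit segments avoiding a neighborhood of $p$; the hypothesis that $r_1$ be small enters here. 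Third, the bounded distortion (C3) must be used to guarantee that the density of the SRB conditionals on unstable leaves is bounded above and below, so that the leaf-volume tail and the $\mu$-tail share the exponent $1/\alpha$. Once these are in place, checking that the precise hypotheses of the decay-of-correlations theorem in \cite{PSZ1,SZ} hold for the present scheme completes the proof.
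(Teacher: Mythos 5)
Your overall strategy matches the paper's: invoke the Young structure of \cite{Z}, reduce the correlation bound to a polynomial tail estimate for the inducing time via the abstract results of \cite{SZ} and \cite{PSZ1} (Proposition \ref{bestprop} in the paper), transfer between $\eta$, $\mu_W$ and leaf volume by the Gibbs property and bounded distortion, and extract the exponent $1/\alpha$ from the local analysis of $\dot u=\gamma\norm{x}^{\alpha}u$ near $p$; your final exponent $s_1=\tfrac1\alpha-1$ is exactly the paper's. However, there is a genuine gap in the tail estimate itself. You assert that, away from a single slow passage near $p$, ``all other returns to $P$ have uniformly bounded length,'' so that $\{\tau>n\}$ is, up to bounded factors, the set of points entering $V$ within unstable-distance $n^{-1/\alpha}$ of $W^s(p)$. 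This is false: $\tau$ is the first return time to a single Markov element $P$, and first return times to a Markov rectangle are unbounded even for the unperturbed uniformly hyperbolic map (their tails are merely exponentially small); moreover an orbit may pass through the slow-down ball several times before returning to $P$, each passage contributing a polynomial factor. Your single-passage heuristic accounts for neither phenomenon.

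This is precisely where the bulk of the paper's proof lives. The components of $\{\tau=N\}$ are stratified by the number $p$ of passages through $Y=B(0,r_1)$ and the total time $k$ spent inside; the length of each component is bounded by composing the uniform expansion $\nu^{N-k}$ outside $Y$ with the polynomial expansion $(m-n)^{\gamma_2}$ across each passage (Lemmas \ref{lemalargos} and \ref{lemainduccion}), and the number of components is bounded by a symbolic counting argument giving $\#\mathcal{S}_{k,N,p}\lesssim p^{-2}e^{(h+\eps_0)(N-k)}$ (Lemma \ref{lemcard}). For the sum over $k$ and $p$ to converge one needs the entropy gap $\log\nu>h$, which is where assumption (A5) (closeness of $f$ to an $f_0$ whose SRB measure is the measure of maximal entropy) enters via Lemma \ref{lemlognu}, and where the choice of $Q$ (large gaps between $P$ and $Y$, forcing $p\leq(N-k)/Q$) is used to absorb the combinatorial factors $\binom{k-1}{p-1}\binom{N-k-1}{p}(r+1)^p$. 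Your proposal never uses (A5) and never confronts this counting problem, so the claimed bound $m_{V^u}(\{\tau>n\})\lesssim n^{-1/\alpha}$ is not established. To repair the argument you would need to show that the contribution of long excursions outside $V$ and of multiple passages is dominated by the single-passage term, which is exactly the content of Lemmas \ref{lemcard}--\ref{finalLema}.
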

\begin{teor}\label{mainresult2}
      Suppose $\alpha\in(0,1/2)$ and $\beta>\beta_0$ for some sufficiently large $\beta_0>\gamma$ (recall $\beta$ and $\gamma$ defined in (A4)). Then for $r_1>0$ sufficiently small, the map $g\colon\Lambda_g\to\Lambda_g$ admits a polynomial lower bound with respect to $\mu$ and a subclass of $\HH$: there is a nested sequence $\YY=\{Y_i\}$ of subsets of $\Lambda_g$  and $s_2>0$ such that for any $h_1,h_2\in\GG(\YY)$ with $\int h_1\dmu\int h_2\dmu>0$,
        $$\abs{C_{n}^{\mu}(h_1,h_2)}\geq C_2n^{-s_2},$$
        where $C_2=C_2(h_1,h_2)>0$.

\end{teor}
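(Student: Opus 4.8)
The plan is to read off the correlation decay from the return-time statistics of the Young-diffeomorphism structure that Zelerowicz \cite{Z} produced for $g$, and then to promote the upper bound of \cref{mainresult1} to a two-sided estimate by means of operator renewal theory. Denote by $F\colon\bigcup_i P_i^s\to P$ the induced map with inducing times $\tau_i$, let $\hat\Delta$ be the associated Young tower with tower map $\hat F$, and let $\hat\mu$ be the lift of $\mu$, so that $(\Lambda_g,g,\mu)$ is a factor of $(\hat\Delta,\hat F,\hat\mu)$. As in \cref{mainresult1}, the decay of $C^\mu_n(h_1,h_2)$ is controlled by the tail
$$\bar\tau(n):=\sum_{\tau_i>n}m_{V^u}(P^s_i)\asymp\hat\mu(\{(x,\ell):\ell>n\}),$$
and the content of the lower bound is that, for admissible observables, this tail is reflected in the correlation from below and does not cancel.

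\textbf{Step 1: two-sided control of the return-time tail.} I would first compute the asymptotics of $\tau$ directly from the slow-down flow near $p$. In the coordinates of (A4), a point entering the slow-down region close to the unstable manifold at unstable coordinate $u_0$ escapes $B(0,r_0)$ after a time obtained by integrating $\dot u=\psi(\lVert x\rVert)\gamma u\approx\gamma u^{1+\alpha}$, which gives an escape time of order $u_0^{-\alpha}$, hence $u_0\asymp n^{-1/\alpha}$ for orbits staying $n$ steps. Combining this with the absolute continuity of $\mu$ along unstable leaves and the bounded-distortion estimates (C3), the leaf measure of the set of points with inducing time exceeding $n$ is of order $n^{-1/\alpha}$; that is, $\bar\tau(n)\asymp n^{-s}$ with $s=1/\alpha$. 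This is where the two standing hypotheses enter. The condition $\alpha\in(0,1/2)$ is exactly $s>2$, which is what the sharp renewal theorems require in order for the first moment $\sum_n n\,\bar\tau(n)$ and the error terms to be controlled. The strong stable contraction (large $\beta$ in (A4)) is what makes the \emph{lower} bound on $\bar\tau(n)$ clean: when $\beta$ is large a returning leaf is pushed exponentially fast onto the unstable manifold before the slow escape begins, so the stable coordinate is negligible during the excursion and the escape time is governed purely by the one-dimensional unstable equation above. This both fixes the exponent and guarantees a definite amount of mass with inducing time $>n$, giving $\bar\tau(n)\ge c\,n^{-s}$.

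\textbf{Step 2: renewal theory and positivity.} Next I would quotient the tower along its stable manifolds, using (C3) to see that the quotient dynamics is an expanding Gibbs--Markov tower whose induced transfer operator has a spectral gap (the standard Young-tower reduction, and the same mechanism underlying \cref{mainresult1}). On this quotient the operator renewal theorems of Sarig and Gou\"ezel apply: since the tail $\bar\tau(n)$ is regularly varying of index $-s$ and $s>2$, the correlation admits the exact asymptotic
$$C^\mu_n(h_1,h_2)=\kappa\Big(\int h_1\dmu\Big)\Big(\int h_2\dmu\Big)\sum_{k>n}\bar\tau(k)\,\big(1+o(1)\big),$$
with $\kappa>0$ a constant depending only on $F$ and $\int\tau\dmu$. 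Since $\sum_{k>n}\bar\tau(k)\asymp n^{-(s-1)}=n^{-(1/\alpha-1)}$, the hypothesis $\int h_1\dmu\int h_2\dmu>0$ makes the leading coefficient nonzero and of fixed sign, yielding $\lvert C^\mu_n(h_1,h_2)\rvert\ge C_2\,n^{-s_2}$ with $s_2=1/\alpha-1$. The role of the subclass $\GG(\YY)$ is to guarantee admissibility: the nested sets $Y_i$ are chosen so that observables supported in some $Y_k$ lift to functions in the Banach space on which the renewal machinery operates (in particular they are bounded away from the degenerate fixed point $p$), ensuring that the subleading terms genuinely decay faster than $n^{-(s-1)}$.

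\textbf{Main obstacle.} I expect the decisive difficulty to be the lower bound on the tail together with the verification of the renewal hypotheses, rather than the renewal theorem itself. Producing $\bar\tau(n)\ge c\,n^{-s}$ requires showing that a uniformly positive fraction of leaf mass is genuinely trapped near $p$ for at least $n$ iterates, with the SRB density bounded below near the tip of the excursion; controlling the density and the return geometry there is precisely what forces the large-$\beta$ assumption. One must also check that the tail is regularly varying (not merely squeezed between two powers) and that the induced system is aperiodic, so that the leading term in the renewal expansion neither oscillates nor vanishes; the positivity condition $\int h_1\dmu\int h_2\dmu>0$ and the support restriction are exactly the hypotheses that remove the remaining cancellations.
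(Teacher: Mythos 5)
Your overall strategy --- the renewal-theoretic exact asymptotic $C^\mu_n(h_1,h_2)=\int h_1\dmu\int h_2\dmu\sum_{N>n}\eta(\tau>N)+O(R_\delta(n))$ for $\delta>2$, combined with a lower bound on the tail of the return time --- is exactly the paper's route (\cref{bestprop}), and your reading of the role of $\alpha\in(0,1/2)$ (namely $\delta=1/\alpha>2$) is correct. The gap is in your Step 1. You assert a matching two-sided bound $\bar\tau(n)\asymp n^{-1/\alpha}$, hence $s_2=1/\alpha-1=s_1$, justifying the lower half by a one-dimensional escape-time heuristic ($u_0\asymp n^{-1/\alpha}$) plus the remark that large $\beta$ collapses the stable coordinate. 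This is not established, and it is stronger than what the paper proves. The leaf measure of $\{\tau>n\}$ is controlled by the expansion of an unstable leaf across the slow-down region, and that expansion is only pinned between two \emph{different} powers: $C_5(m-n)^{\gamma_2}\leq\ell(g^m(\sigma))/\ell(g^n(\sigma))\leq C_6(m-n)^{\gamma_1}$ with $\gamma_2=1+1/\alpha$ and $\gamma_1=(1+1/\alpha)\bigl(\tfrac{2\gamma}{\beta-\gamma}+2^{\alpha/2}\bigr)>\gamma_2$ (\cref{lemalargos}). Consequently the tail bounds do not match: the upper bound is $n^{-(\gamma_2-1)}$ (\cref{finalLema}) but the lower bound is only $n^{-(\gamma_1-1)}$ (\cref{polynlower}), and the resulting exponent is $s_2=\gamma_1-2>1/\alpha-1$. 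Your heuristic silently assumes the expansion through the ball is comparable to $(m-n)^{1+1/\alpha}$ from above as well as below, which is precisely the estimate that is missing.

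Two concrete ingredients your proposal would need and does not supply: (i) an \emph{upper} bound on derivative growth along an excursion through $B(0,r_1)$ --- the paper's \cref{newineq}, obtained by integrating $\norm{x}^{\alpha}$ along the trajectory and splitting at the time where $\tan\theta=1$; this is where the exponent $\gamma_1$ and its dependence on $\beta-\gamma$ come from; and (ii) the existence, for arbitrarily large $n\in\mathcal{N}$, of an $s$-subset $P^s_l$ with $\tau(P^s_l)=n$ whose orbit spends all but a bounded number of iterates inside the ball (\cref{Lema7.1PSS}, a symbolic argument built from the stable and unstable manifolds of $p$); without (ii) you have no witness that the tail is nonzero at scale $n$ at all. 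Finally, the purpose of taking $\beta$ large is not to ``fix the exponent'' at $1/\alpha$: it is to force $\gamma_1-2<\gamma_2-1=1/\alpha$, i.e.\ $(\alpha+1)\bigl(\tfrac{2\gamma}{\beta-\gamma}+2^{\alpha/2}\bigr)<2\alpha+1$, so that the main term $n^{-(\gamma_1-2)}$ is not swallowed by the $O(n^{-1/\alpha})$ error in the renewal expansion. If your matching tail asymptotic were provable, the theorem would improve to $s_2=s_1$; as written, that step is an unproved assertion, and it is the heart of the matter.
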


\section{Example: The Smale-Williams solenoid}\label{solenoid}
\vspace{0.4cm}
An example of the slow-down procedure in dimension 2 was provided by Katok \cite{K} when constructing the Katok map by perturbing a hyperbolic toral automorphism. We now provide an example in dimension 3, showing that the Smale-Williams solenoid fits in the setting from Section \ref{Setting}, and hence our results apply to these examples.

Let $\TT:=S^1\times D$ be the solid torus, where $S^1=\R/\Z$ and $D$ is the closed unit disk in $\R^2$.\\

\subsection{The general three-dimensional solenoids.} Let $F\colon\TT\to\TT$ be a $C^1$ embedding of the form
$$F(t,x,y)=(h(t),\lambda_1(t) x+z_1(t),\lambda_2(t) y+z_2(t)),$$
where
$$h\colon S^1\to S^1,\quad\lambda_1,\lambda_2\colon S^1\to(0,1) ,\quad z_1,z_2\colon S^1\to(-1,1)$$
are $C^1$ maps and $h$ is expanding (i.e. $dh/dt>1$). Then $m:=\deg h\geq 2$ and $F$ stretches the solid torus in the $S^1$-direction. We may also impose some conditions on $\abs{\lambda_i}$ and $\abs{z_i}$ so that $F(\TT)\subset\INT\TT$. Then, the image $F(\TT)$ is wrapped around inside $\TT$ exactly $m$ times. We define the solenoid $\Lambda=\bigcap_{n\geq 0}F^n(\TT)$, and one can show that $\Lambda$ is a hyperbolic attractor for $F$.

We are going to focus on a more particular case of $F$, though we believe that our result can be extended to a more general class of solenoids.

\subsection{A particular case}\label{subsec2.1} Fix an integer $m\geq 2$ and coefficients $\eta\in(0,1)$ and $\alpha,\delta\in(0,\min\{\eta,1-\eta\})$. Consider a diffeomorphism onto its image $F\colon\TT\to\TT$ of the form
$$F(t,x,y)=\left(mt,\alpha x+\eta\cos(2\pi t),\delta y+\eta\sin(2\pi t)\right).$$

It is easy to show that with our choice of coefficients $F(\TT)\subset\INT\TT$, and hence we set $\Lambda=\bigcap_{n=0}^{\infty}F^n(\TT)$, which is a topological attractor for $F$. We also have the following properties:

\begin{itemize}
    \item the set $\Lambda$ is hyperbolic for $F$ and $F|_{\Lambda}$ is topologically transitive;
    \item the set of periodic points of $F$ is dense in $\Lambda$ and hence $NW(F)=\Lambda$;
    \item the map $F$ expands in the $S^1$-direction, and contracts in the $D$-direction, so $E^u(x)$ is one dimensional for every $x\in\Lambda$;
    
    \item the map $F$ has a fixed point $p=\left(0,\frac{\eta}{1-\alpha},0\right)\in\Lambda$.
\end{itemize}

\begin{prop}
Suppose that $\alpha=\delta=:\lambda <1/m$. Then, the map $F$ satisfies the properties (A1)-(A5) as in the setting from Section \ref{Setting}.
\end{prop}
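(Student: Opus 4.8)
The plan is to verify the five conditions (A1)--(A5) in turn, reducing each to an explicit computation with the formula for $F$. Transitivity of $F|_{\Lambda}$ (condition (A1)), together with density of periodic points and $NW(F)=\Lambda$, are part of the standard structure theory of the Smale--Williams solenoid and were already recorded above, so (A1) needs no further argument. For (A2) and (A3) I would differentiate $F$ to obtain the lower-triangular derivative with diagonal entries $m$ (in the $t$-direction) and $\lambda,\lambda$ (in the fiber $D$-direction). The fiber direction $E^s(x)=\{0\}\times\R^2$ is manifestly $DF$-invariant and contracted by $\zeta=\lambda<1$, since $DF(0,v_x,v_y)=(0,\lambda v_x,\lambda v_y)$. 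For the unstable bundle I would run the graph transform: writing $E^u(x)=\mathrm{span}(1,a(x),b(x))$, invariance forces $a(Fx)=m^{-1}(-2\pi\eta\sin 2\pi t+\lambda a(x))$ and the analogous relation for $b$, a uniform contraction of rate $\lambda/m$ on the slopes, hence a unique bounded invariant section with $|a|,|b|\le 2\pi\eta/(m-\lambda)$. This yields a one-dimensional $E^u$ (condition (A3)) satisfying $DF(1,a,b)=m(1,a\circ F,b\circ F)$, so in the adapted (horizontal) metric $E^u$ is expanded by exactly $\nu=m>1$; this establishes (A2). I would note that only $m\ge 2$ and $\lambda<1$ are used so far, and that the sharper hypothesis $\lambda<1/m$ enters only later.

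The heart of the proof is (A4). After translating the fixed point $p=(0,\eta/(1-\lambda),0)$ to the origin via $x=\eta/(1-\lambda)+\tilde x$, the map takes the form $t'=mt$, $\tilde x'=\lambda\tilde x+\eta(\cos 2\pi t-1)$, $\tilde y'=\lambda\tilde y+\eta\sin 2\pi t$. A naive linearization of $DF_p$ fails because of the genuinely nonlinear terms $\cos 2\pi t$ and $\sin 2\pi t$; instead I would remove the $t$-dependence \emph{exactly} by a fiber-shear $X=\tilde x-u(t)$, $Y=\tilde y-v(t)$, where $u$ solves the cohomological equation $u(mt)-\lambda u(t)=\eta(\cos 2\pi t-1)$ (and $v$ the analogous one with $\eta\sin 2\pi t$). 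This is solved explicitly by $u(t)=\sum_{n\ge1}\lambda^{n-1}\eta(\cos(2\pi t/m^n)-1)$, whose summands are $O(\lambda^{n-1}t^2/m^{2n})$, so the series converges geometrically and defines a (real-analytic) function near $0$, with differentiation only improving convergence. In the coordinates $(t,X,Y)$ the map becomes exactly $(t,X,Y)\mapsto(mt,\lambda X,\lambda Y)$, which is the time-one map of $\dot\xi=A\xi$ with $A=(\log m)\,\id_u\oplus(\log\lambda)\,\id_s$. Reading off $\gamma=\log m$ and $-\beta=\log\lambda$, the requirement $\beta>\gamma>0$ becomes $\log(1/\lambda)>\log m$, i.e.\ precisely $\lambda<1/m$; this proves (A4).

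For (A5) I would establish that the SRB measure and the measure of maximal entropy coincide via a cohomologous-to-constant argument for the unstable Jacobian. Setting $w(x)=\tfrac12\log(1+a(x)^2+b(x)^2)$, the relation $DF(1,a,b)=m(1,a\circ F,b\circ F)$ gives $\log J^uF=\log m+w\circ F-w$, so $\log J^uF$ is cohomologous to the constant $\log m$; equivalently the unstable Lyapunov exponent equals $\log m$ for every invariant measure. Ruelle's inequality then gives $h_{top}(F)\le\log m$, while Pesin's entropy formula applied to $\mu_{SRB}$ gives $h_{\mu_{SRB}}(F)=\int\log J^uF\,d\mu_{SRB}=\log m$; since $\log m=h_{top}(F)$ is already attained on the base $t\mapsto mt$ and the transitive (indeed topologically mixing) hyperbolic attractor has a unique measure of maximal entropy, $\mu_{SRB}$ is that measure. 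For the uniform-expansion clause I would pass to the adapted metric in which $E^u$ is stretched by exactly $m$ and take $f_0=F$ (trivially within any $C^1$-neighborhood of itself), or else produce a genuinely uniformly expanding solenoid $f_0$ that is $C^1$-close to $F$.

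The main obstacle is (A4): the exact, not merely infinitesimal, linearization near $p$. Its success hinges on recognizing that because the base dynamics $t\mapsto mt$ is already linear and the fibers are affine, the obstruction to linearization collapses to the scalar cohomological equations above, which are solvable precisely because $m^k\ne\lambda$ for all $k$ (no resonance between the expanding rate $m$ and the contracting rate $\lambda$); this solvability is independent of $\lambda<1/m$, which is needed only to order the exponents as $\beta>\gamma$. A secondary delicate point is reconciling the uniform-expansion requirement in (A5) with a \emph{smooth} Riemannian metric: the unstable bundle of a solenoid is in general only H\"older, so the adapted metric giving exactly constant expansion is only H\"older, and one must either check that the $C^{1+\alpha}$ framework tolerates this or control the $C^1$-distance from $F$ to a smooth uniformly expanding model $f_0$.
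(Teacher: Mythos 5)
Your verification of (A4) is, at its core, the same argument as the paper's: both proofs conjugate $F$ near $p$ by a fiber shear of the form $(t,x,y)\mapsto(t,x-a(t),y-b(t))$ and reduce the problem to the functional equation $a(mt)=\lambda a(t)+\eta\cos(2\pi t)$ (and its sine analogue), with $\gamma=\log m$, $\beta=\log(1/\lambda)$, and $\lambda<1/m$ entering only to guarantee $\beta>\gamma$. The difference is purely in how the equation is solved: the paper matches power-series coefficients and obtains $a_k=\eta(-1)^{k/2}(2\pi)^k/((m^k-\lambda)k!)$, while you iterate the equation backward to get the geometric series $\sum_{n\ge1}\lambda^{n-1}c(t/m^n)$; expanding $1/(m^k-\lambda)$ as a geometric series shows these are literally the same analytic function, so your ``no resonance $m^k\ne\lambda$'' observation is the exact counterpart of the paper's nonvanishing denominators. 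Where you genuinely go beyond the paper is in (A2)--(A3) (the graph-transform construction of the one-dimensional $E^u$, which the paper only asserts as a listed property of the solenoid) and especially in (A5): the paper's proof does not address (A5) at all, whereas your cohomology identity $\log J^uF=\log m+w\circ F-w$ gives a clean and correct proof that the SRB measure is the measure of maximal entropy. Your closing caveat is also well taken: the adapted metric in which $E^u$ is expanded by exactly $m$ is only H\"older (the unstable bundle depends on the full backward itinerary), so the literal equality $\norm{d_xf_0v}=\lambda\norm{v}$ in (A5) requires either tolerating a H\"older metric or exhibiting a nearby smooth uniformly expanding model; this is a real gap in the statement-as-written that the paper silently skips and that you at least identify. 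In short: same mechanism for the key condition, a more explicit closed form for the conjugacy, and a more complete (and more honest) treatment of the remaining conditions.
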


\begin{proof}
We need to verify that in a neighborhood of the point $p$, the map $F$ can be written as the time-one map of a linear vector field. To this end, we need to find a change of coordinates $\phi$ in a neighborhood of $p$ in such a way that the map $f:=\phi\circ F\circ\phi^{-1}$ fixes 0 and in a neighborhood of 0 it is the time-one map of a flow given by
$$\begin{pmatrix}
t\\
x\\
y
\end{pmatrix}'=
\begin{pmatrix}
 \gamma & 0 & 0\\
 0 & -\beta & 0\\
 0 & 0 & -\beta
\end{pmatrix}\begin{pmatrix}
t\\
x\\
y
\end{pmatrix},
$$

as required in Section \ref{Setting}. That is, we need $f(t,x,y)=(e^{\gamma}t,e^{-\beta}x,e^{-\beta}y)$ for some $\beta>\gamma>0$. We set $\gamma:=\log m$ and $\beta:=\log(1/\lambda)$, so the final map must be $f(t,x,y)=(mt,\lambda x,\lambda y)$ in a neighborhood of $0$. Observe that since $\lambda<1/m$, we have $\beta >\gamma >0$.

Let us search for the map $\phi$ in the form $\phi(t,x,y)=(t,x-a(t),y-b(t))$, where $a(t)$ and $b(t)$ are two functions to be chosen. Observe that we have $\phi^{-1}(t,x,y)=(t,x+a(t),y+b(t))$, so 
$$F(\phi^{-1}(t,x,y))=\left(mt,\lambda x+\lambda a(t)+\eta\cos(2\pi t),\lambda y+\lambda b(t)+\eta\sin(2\pi t)\right).$$

Then, applying $\phi$ we get
$$f(t,x,y)=\left(mt,\lambda x+\lambda a(t)+\eta\cos(2\pi t)-a(mt),\lambda y+\lambda b(t)+\eta\sin(2\pi t)-b(mt)\right).$$

In order to satisfy $f(t,x,y)=(mt,\lambda x,\lambda y)$, we need:

\begin{equation}\label{ee1}
     a(mt)=\lambda a(t)+\eta\cos(2\pi t)\qquad\text{and}\qquad b(mt)=\lambda b(t)+\eta\sin(2\pi t).
\end{equation}

 Since we want the functions $a(t),b(t)$ to be at least $C^{\infty}$, we can write them as power series $a(t)=\sum_{k=0}^{\infty}a_kt^k$ and $b(t)=\sum_{k=0}^{\infty}b_kt^k$. Now we need to find the coefficients $a_k, b_k$ so that \eqref{ee1} holds. That is,
 
 \begin{align}
 \sum_{k=0}^{\infty}m^ka_kt^k&=\sum_{k=0}^{\infty}\lambda a_kt^k+\eta\sum_{\{k\geq 0:k\text{ even}\}}\frac{(-1)^{k/2}2^k\pi^k}{k!}t^k,\label{ee2}\\
 \sum_{k=0}^{\infty}m^kb_kt^k&=\sum_{k=0}^{\infty}\lambda b_kt^k+\eta\sum_{\{k\geq 0:k\text{ odd}\}}\frac{(-1)^{\frac{k-1}{2}}2^k\pi^k}{k!}t^k.\label{ee3}
 \end{align}
 
 Comparing coefficients, from \eqref{ee2} we have
 $$a_k=\left\{\begin{array}{lcc}
      0 & , & k\text{ is odd}  \\
      {}\\
      \dfrac{\eta(-1)^{k/2}2^k\pi^k}{(m^k-\lambda)k!}& , & k\text{ is even} 
 \end{array},\right.$$
 
 and from \eqref{ee3} we have
 
 $$b_k=\left\{\begin{array}{lcc}
      \dfrac{\eta(-1)^{\frac{k-1}{2}}2^k\pi^k}{(m^k-\lambda)k!} & , & k\text{ is odd}  \\
      {}\\
      0 & , & k\text{ is even} 
 \end{array}.\right.$$
 
The power series $a(t)$ and $b(t)$ defined with these coefficients have infinite radius of convergence, and in particular they define two $C^{\infty}$ functions, moreover analytic, in a neighborhood of $t=0$. By construction, they satisfy $f(t,x,y)=(mt,\lambda x,\lambda y)$ as desired. The map $\phi$ is clearly a local diffeomorphism at $p$.
\end{proof}

\vspace{0.5cm}

\section{Properties of the map $g$}\label{prelim}

In order to establish the decay of correlations, we need to summarize properties of the slow-down map $g$. Proofs of these results will be omitted and cited.

\subsection{The map $g$ as a Young diffeomorphism}
We now give a brief description of the Young structure provided in \cite[Theorem 4.2]{Z}. Let $f\colon\Lambda\to\Lambda$ and $g\colon\Lambda_g\to\Lambda_g$ be as in Section \ref{Setting}.

There is a homeomorphism $h\colon\Lambda_g\to\Lambda$ with $h\circ g|_{\Lambda_g}=f\circ h$, which satisfies $d_{C^0}(\id|_{\Lambda_g},h)\leq Kr_1$ (recall that $r_1$ is the radius of the slow-down ball) for some constant $K>0$ and $h(p)=p$ (see \cite[Proof of Statement 2 in Theorem 3.1]{Z}). By \cite[Theorem 3.3]{Z}, for every $x\in\Lambda_g\minus\{p\}$, there is a one dimensional subspace $E^u_g(x)$ of $T_xM$, and a $\dim M-1$ dimensional subspace $E^s_g(x)$ of $T_xM$, which depends continuously on $x$ and can be extended by continuity to $x=p$. Moreover, for every $x\in \Lambda_g$ there are $C^{1+\alpha}$ embedded disks $W^u_x$, $W^s_x\subset U$ such that $T_xW^u_x=E^u_g(x)$, $T_xW^s_x=E^s_g(x)$. In fact $W^u_x$ and $W^s_x\cap\Lambda_g$ are images of unstable and stable manifolds of $f$ (intersected with $\Lambda$) under $h^{-1}$.

Let $\mathcal{R}$ be a Markov partition for $f$, then $\mathcal{P}:=h^{-1}(\mathcal{R})$ is a Markov partition for $g$. Given $Q>0$, we can choose $r_1$ small enough so that if $\diam\mathcal{P}$ is small enough, then there is $P\in\PP$ such that: every piece of trajectory of $g$ starting from $P$ and ending in $B(0,r_1)$ has length at least $Q$, and every piece of trajectory of $g$ starting from $B(0,r_1)$ and ending in $P$ has length at least $Q$. To ensure this, take a small $r_1>0$ and $P\in\PP$ which does not intersect $V$ (see condition (A4)). Also, we choose $P$ such that it intersects with $W_0^s$.

The set $P$ has hyperbolic product structure, so it will be the base of the Young structure. One can choose closed segments of local unstable manifolds $W_x^u\cap P$, for $x\in P$, as the continuous family of unstable disks in \ref{young}, and we use the same notation $\Gamma^u=\{V^u\}$. To obtain a continuous family of stable disks, one can choose the connected component of $W^s_x\minus\partial^uP$ containing $x\in P$, where $\partial^uP$ denotes the unstable boundary of $P$ (see for example \cite[Definition 18.7.1]{KH}), and again we use the same notation $\Gamma^s=\{V^s\}$. Consider $\tau(x)$ to be the first return time of $x$ to $P$. For $n\geq 0$, assign a finite collection $\{P^s_{i_k(n)}\}$ of connected components of the set $\{x\in P:\tau(x)=n\}$. This structure satisfies all the conditions for a Young diffeomorphism given in \ref{young} (see \cite[Proof of Theorem 4.2]{Z}). Denote by $G$ the induced map of the map $g$ (see right before Condition (C4) in the definition of Young diffeomorphism).

For the following properties and facts about inducing schemes we refer to \cite{PSZ1}. It was shown that the SRB measure $\mu$ is the lifted measure of a measure $\eta$, which is the equilibrium measure of the normalized induced potential $\overline{\varphi}\colon\bigcup P^s_{i_n}\to\R$ given by
$$\overline{\varphi}(x)=-\sum_{k=0}^{\tau(x)-1}\log\abs{dg|_{E^u_g(g^kx)}}-P_L(\varphi_1)\tau(x),$$

where $P_L$ is the topological pressure defined by the variational principle considering only lifted measures (see \cite{PSZ1} for more details). The measure $\eta$ has a Gibbs property with the following consequence: there is $C_0\geq 1$ such that for every $P^s_{i_n}$ and every $x\in P^s_{i_n}$, we have
\begin{equation}\label{gibbs}
C_0^{-1}\leq\frac{\eta
(P^s_{i_n})}{J^uG(x)^{-1}}\leq C_0.
\end{equation}

Also, by \cite[Lemma 7.2]{PSZ1} there is a constant $C_1>0$ such that for any $P^s_{i_n}$, any $x\in scl(P^s_{i_n})$ and $W\in\Gamma^u$, we have
\begin{equation}\label{postgibbs}
C_1^{-1}J^uG(x)^{-1}\leq\mu_W(W\cap P^s_{i_n})\leq C_1 J^uG(x)^{-1}.
\end{equation}

In order to establish decay of correlations, we are going to use the following proposition, which is a corollary of results in \cite{SZ} (see also \cite{G} and \cite{S}). The proposition is stated as in \cite{PSS}, but adapted to our work.

\begin{prop}\label{bestprop}
    Assume that $\gcd\{\tau\}=1$, and that there is a constant $K_1>0$ such that for any $P^s_{i_n}$, any $x,y\in P^s_{i_n}$ and $0\leq j\leq \tau(P^s_{i_n})$ we have
    \begin{equation}\label{assum1}
    d(g^j(x),g^j(y))\leq K_1\max\{d(x,y),d(G(x),G(y))\}.
    \end{equation}

    Also, assume that there are $\delta>1$ and $K_2>0$ such that 
    \begin{equation}\label{assum2}
    \eta(\tau > n)\leq \frac{K_2}{n^{\delta}}.
    \end{equation}

    Then, the following statements hold:
    \begin{enumerate}
        \item For any $h_1,h_2\in\HH$, there is $K_3>0$ such that $\abs{C^{\mu}_n(h_1,h_2)}\leq K_3n^{-(\delta-1)}$.

        \item There is a sequence $\YY=\{Y_i\}$ of nested subsets of $\Lambda_g$ such that for any $h_1,h_2\in\GG(\YY)$, we have
        $$C^{\mu}_n(h_1,h_2)=\sum_{N=n+1}^{\infty}\eta(\tau>N)\int h_1\dmu\int h_2\dmu+O(R_{\delta}(n)),$$
        where
        $$R_{\delta}(n)=\left\{\begin{array}{lcc}
            1/n^{\delta} & , & \delta >2\\
            \log(n)/n^2 & , & \delta = 2\\
            1/n^{2\delta-2} & , & 1<\delta<2
        \end{array}.\right.$$
    \end{enumerate}
\end{prop}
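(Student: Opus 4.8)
The plan is to obtain both statements as consequences of the operator renewal theory for inducing schemes developed in \cite{SZ}, together with the sharp asymptotic estimates of \cite{G} and \cite{S}, applied to the Young structure for $g$ described above. The idea is to pass from $(g,\mu)$ to the induced system $(G,\eta)$ on the base $P$, express the correlations $C^\mu_n(h_1,h_2)$ through the iterates of the transfer operator of $g$ relative to $\mu$, and then feed the polynomial tail \eqref{assum2} into the renewal machinery, which converts it into the polynomial decay of statement (1) and the explicit leading asymptotics of statement (2). Since the renewal theorems are already available in the cited references, the work consists in checking that their hypotheses hold in our setting and in tracking the resulting exponents.

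First I would fix the functional-analytic framework for the base map. The induced map $G$ with the partition $\{P^s_{i_n}\}$ is full-branched, and the Gibbs property \eqref{gibbs} together with \eqref{postgibbs} provides the uniform bounded distortion that makes the transfer operator of $G$ act with a spectral gap on a Banach space of suitably (dynamically) H\"older functions on $P$. The hypothesis $\gcd\{\tau\}=1$ guarantees that the induced system is mixing and removes the periodicity obstruction in the renewal equation, so that the renewal operators are well defined and the leading eigenvalue is simple. At this point the two abstract ingredients required by \cite{SZ} are in place: a base transfer operator with a spectral gap, and return times with a polynomial tail.

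The decisive preparatory step is to verify that the H\"older observables $h_1,h_2$ on $\Lambda_g$ lift to admissible functions for the renewal scheme. This is exactly the role of \eqref{assum1}: it bounds the spreading of each orbit segment $x,g(x),\dots,g^{\tau(x)}(x)$ by the distances between its endpoints, so that a H\"older function on $\Lambda_g$ pulls back to a function on the tower whose oscillation along each column is controlled by the H\"older seminorm of the observable and by the contraction along the return. Consequently the pullbacks of $h_1,h_2$ lie in the function class on which the renewal operators of \cite{SZ} act boundedly, and $C^\mu_n(h_1,h_2)$ can be written as the $n$-th term of the associated operator renewal series applied to these pullbacks. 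Granting this, statement (1) is the polynomial-tail decay estimate of \cite{SZ}: when $\eta(\tau>n)=O(n^{-\delta})$ the renewal series decays like $n^{-(\delta-1)}$ on the relevant space, whence $\abs{C^\mu_n(h_1,h_2)}\leq K_3 n^{-(\delta-1)}$. For statement (2) I would invoke the second-order expansion of \cite{G}: its leading term is $\big(\sum_{N>n}\eta(\tau>N)\big)\int h_1\dmu\int h_2\dmu$, with remainder of the stated order $R_\delta(n)$ across the three regimes $\delta>2$, $\delta=2$ and $1<\delta<2$. The nested sequence $\YY=\{Y_i\}$ is dictated by the levels of the tower: asking $h_1,h_2\in\GG(\YY)$ ensures their pullbacks are supported on finitely many columns, which is what makes the remainder sums in the expansion converge at the rate $R_\delta(n)$.

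The step I expect to be the main obstacle is the regularity transfer just described, namely making precise and uniform the claim that a H\"older observable on $\Lambda_g$ becomes an admissible function on the tower. The difficulty is that $\tau$ is unbounded and the geometry of the columns degenerates as one approaches the fixed point $p$, where hyperbolicity is lost; one must therefore check that \eqref{assum1} is strong enough to bound the pullback norms uniformly over the infinitely many columns $P^s_{i_n}$. This is the precise point where the slow-down geometry has to cooperate with the abstract hypotheses, while verifying \eqref{assum1} and \eqref{assum2} themselves for $g$ is left to the later sections; once the regularity transfer is secured, the remainder of the argument is bookkeeping with the renewal theorems and their error exponents.
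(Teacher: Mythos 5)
Your proposal is correct and follows essentially the same route as the paper, which does not prove Proposition~\ref{bestprop} at all but imports it as a corollary of the operator-renewal results in \cite{SZ} (see also \cite{G} and \cite{S}), stated as in \cite{PSS}; the ingredients you identify --- spectral gap for the induced transfer operator from the Gibbs/distortion estimates, aperiodicity from $\gcd\{\tau\}=1$, admissibility of H\"older observables via \eqref{assum1}, and Gou\"ezel's second-order expansion yielding the three regimes of $R_\delta(n)$ --- are exactly what those references supply. The only caveat is that your sketch, like the paper, ultimately defers the hard analytic work (in particular the uniform regularity transfer over unbounded columns that you correctly flag as the main obstacle) to the cited literature rather than carrying it out.
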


\section{Proof of Theorems \ref{mainresult1} and \ref{mainresult2}}\label{Proofs}

We aim to use Proposition \ref{bestprop}. To this end we need to bound the $\eta$-measure of the tails of the return time. First, we observe that it suffices to bound the measure of the tail of the return time with respect to the leaf volume $m_W$, where $W$ is an unstable curve in the base $P$ of the Young structure. This is useful as this measure is the length on the leaf. So we fix an unstable leaf $W$ and we denote by $\ell$ the measure $m_W$.

\begin{lem}\label{comparisonetalength}
Let $P^s_i$ be any basic set from the Young structure. Then, there is $C_2\geq 1$ such that 
$$C_2^{-1}\eta(P^s_i)\leq \ell(W\cap P^s_i)\leq C_2\eta(P^s_i).$$
\end{lem}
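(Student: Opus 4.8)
The plan is to show that both $\eta(P^s_i)$ and $\ell(W\cap P^s_i)$ are comparable, up to multiplicative constants uniform in $i$, to the single scalar quantity $J^uG(x)^{-1}$ for an arbitrary point $x\in P^s_i$; the lemma then follows by transitivity of comparability. The comparison for $\eta$ is immediate: the Gibbs property \eqref{gibbs} gives $C_0^{-1}J^uG(x)^{-1}\leq\eta(P^s_i)\leq C_0\,J^uG(x)^{-1}$ for every $x\in P^s_i$.

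For the leaf volume I would argue geometrically, using that $E^u$ is one-dimensional by (A3), so $\ell=m_W$ is arclength along $W$ and $J^uG$ is a genuine scalar derivative. By the Markov property (C1)(b), the induced map $G=g^{\tau}$ carries $W\cap P^s_i$ diffeomorphically onto the full unstable leaf $V^u(Gx)\cap P$ of the base. Since $G$ is a one-dimensional diffeomorphism between these two leaves, the change of variables formula expresses $\ell(W\cap P^s_i)$ as the integral of $J^uG(G^{-1}y)^{-1}$ over $V^u(Gx)\cap P$ against the leaf volume of the image leaf. Taking $n=0$ in the backward distortion bound (C3)(b) shows that $J^uG$ varies by at most a factor $e^{c}$ along $W\cap P^s_i$, so that $J^uG(G^{-1}y)$ is comparable to $J^uG(x)$ uniformly in $i$ and in $y$; since the leaf volumes of the full unstable leaves $V^u\cap P$ are bounded away from $0$ and $\infty$ (uniformly over $\Gamma^u$, by (C4) together with the continuity and compactness built into the family $\Gamma^u=\{V^u\}$ over $\claus{K^s}$), it follows that $C^{-1}J^uG(x)^{-1}\leq\ell(W\cap P^s_i)\leq C\,J^uG(x)^{-1}$ for a uniform constant $C$. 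Alternatively, this bound can be read directly from \eqref{postgibbs}, once one notes that the conditional SRB measure $\mu_W$ is equivalent to $\ell$ with density bounded above and below, itself a consequence of the bounded distortion assumption (C3). Chaining the two comparisons and setting $C_2:=C_0\,C$ yields $C_2^{-1}\eta(P^s_i)\leq\ell(W\cap P^s_i)\leq C_2\,\eta(P^s_i)$, with $C_2$ independent of $i$.

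The only point requiring genuine care is the uniformity of the distortion constant over the infinitely many basic sets $P^s_i$, that is, that the factor relating $J^uG(G^{-1}y)$ to $J^uG(x)$ does not deteriorate as the inducing time $\tau_i\to\infty$. This is precisely the content of (C3)(b) evaluated at $n=0$, with (C2) supplying the uniform hyperbolicity of $G$ that underlies it, so the argument reduces to invoking the Young-diffeomorphism axioms already established for $g$ rather than re-deriving any distortion estimate. The remaining verifications, namely that $W$ is a reference leaf with $\ell(W\cap P)>0$ and that $G(W\cap P^s_i)$ is a full unstable leaf of $P$, are guaranteed by (C4) and (C1)(b) respectively.
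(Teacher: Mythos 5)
Your proposal is correct, and its primary argument takes a genuinely different (more self-contained) route than the paper. The paper proves the lemma in two steps: first it compares $\ell$ with the conditional SRB measure $\mu_W$ on $W$, using the explicit density $d=\rho/R$ (the infinite backward product of unstable Jacobian ratios of $g$), whose continuity on the compact leaf $W$ gives two-sided bounds $K_3^{-1}\leq d\leq K_3$; it then chains the already-established estimates \eqref{gibbs} and \eqref{postgibbs} to get $\mu_W(W\cap P^s_i)\asymp\eta(P^s_i)$ and sets $C_2=C_0C_1K_3$. You instead pivot both quantities directly on $J^uG(x)^{-1}$: the $\eta$ side via \eqref{gibbs} exactly as the paper does, and the $\ell$ side by change of variables along the leaf, the $n=0$ case of the distortion axiom (C3)(b), and uniform upper and lower bounds on the lengths of the full unstable leaves $V^u\cap P$ --- in effect you re-derive the analogue of \eqref{postgibbs} with $\ell$ in place of $\mu_W$, bypassing the SRB conditional density altogether. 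Your route buys independence from the density formula for $\mu_W$ at the cost of needing $\inf_{V^u}\ell(V^u\cap P)>0$, a fact the paper itself uses (implicitly, as $I=\inf_V\ell(V)>0$ in Lemma \ref{polynlower}) but does not prove; your appeal to (C4) plus continuity and compactness of the family $\Gamma^u$ is the right justification. The alternative you mention at the end is precisely the paper's proof. One small correction to that side remark: the equivalence of $\mu_W$ and $\ell$ with bounded density is not a consequence of (C3), which governs distortion of the \emph{induced} map $G$; in the paper it comes from the continuity of the SRB density $\rho$ (built from backward iterates of $g$ itself) together with compactness of $W$.
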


\begin{proof}
    We start comparing $\ell$ with $\mu_W$ in $W$. These measures are equivalent, and the density $d$ of the conditional measure $\mu_W$ with respect to $\ell$ on $W$ is given by $d(y)=\frac{1}{R}\rho(y)$, where $\rho$ and $R$ are as follows: Fix $z\in W$,
    $$\rho(y):=\lim_{n\to\infty}\prod_{k=0}^{n-1}\frac{\Jac(dg|E^u(g^{-k}y))}{\Jac(dg|E^u(g^{-k}z)},\qquad R:=\int_W\rho\,d\ell.$$

    It is known that the function $\rho$ (and hence the function $d$) depends continuously on $y\in W$. Since $W$ is a closed segment of unstable leaf in a compact set, it is compact as well and hence there is $K_3\geq 1$ such that $K_3^{-1}\leq d(y)\leq K_3$ on $W$. Thus for a measurable set $E\subset W$, integrating the last inequality with respect to $\ell$ over $E$ we obtain 
    $$K_3^{-1}\ell(E)\leq\mu_W(E)\leq K_3\ell(E).$$

    Now consider $P^s_i$. By \eqref{gibbs} and \eqref{postgibbs} we have that
    $$(C_0C_1)^{-1}\eta(P^s_i)\leq\mu_W(W\cap P^s_i)\leq C_0C_1\eta(P^s_i).$$

    The result follows setting $C_2:=C_0C_1K_3$.
\end{proof}

It follows from Lemma \ref{comparisonetalength} that
\begin{equation}\label{largovseta}
C^{-1}_2\eta(\{x:\tau(x)>n\})\leq\ell(\{x\in W:\tau(x)>n\})\leq C_2\eta(\{x:\tau(x)>n\}),
\end{equation}
as the sets of points where $\tau>n$ is the disjoint union of the sets where $\tau =N$ for $N>n$, and due to the choice of the sets $P^s_i$. Because of this fact, it is very useful to study the length of the unstable curves in $P$, and how this length behaves as the curves pass through the slow-down ball.

By \cite[Proof of Statement 1 in Theorem 3.1]{Z}, we have that if $\sigma$ is a piece of unstable curve, and if $r>s$ are so that $g^s(\sigma),g^{s+1}(\sigma),\ldots,g^{r-1}(\sigma)$ are all outside $B(0,r_1)$, then 
\begin{equation}\label{unifexp}
\ell(g^r(\sigma))\geq C_3\nu^{r-s}\ell(g^s(\sigma)),
\end{equation}
where $\nu>1$ is as in
Section \ref{Setting} and $C_3>0$ is a constant (which we may choose to be less than 1).

Also, observe that $\norm{dg}$ is bounded in $\Lambda_g$ as a continuous function in a compact set. Set $\xi:=\sup_{x\in\Lambda_g}\norm{d_xg}<\infty$. Since for $x\in\Lambda_g$ and $v\in E_g^u(x)$ we have $\norm{d_xg(v)}\leq\xi\norm{v}$, it follows that if $\sigma\subset\Lambda_g$ is an unstable curve and if $r>s$, then
\begin{equation}\label{unifupper}
    \ell(g^r(\sigma))\leq \xi^{r-s}\ell(g^s(\sigma)).
\end{equation}

Now, we would like to estimate the length of unstable curves as they pass through the perturbed region $Y:=B(0,r_1)$. Let $x\colon[0,T]\to Y$ be a trajectory of the flow such that enters $Y$ at $t=0$ and exits at time $t=T$. It was shown in \cite{CDP} (and mentioned also in \cite{Z}) that if $x(t)$ and $y(t)$ are trajectories on the same unstable leaf, we have
$$d(x(t),y(t))\leq C''(T+1-t)^{-(1+\frac{1}{\alpha})}d(x(T),y(T)).$$

Similarly, we can write an estimate for the distance between the endpoints of an unstable curve as we apply iterates of the map $g$: for an unstable curve $\sigma$ with endpoints $x,y$ such that $g^n(\sigma)$ enters the slow down ball, and $g^m(\sigma)$ exists it, we have
\begin{equation}\label{ineqdist}
d(g^n(x),g^n(y))\leq C'(m-n)^{-(1+\frac{1}{\alpha})}d(g^m(x),g^m(y)).\end{equation}

We need also a similar reverse inequality. 

\begin{lem}\label{newineq} Let $x\colon[0,T]\to Y$ be a trajectory of the flow such that $x(t)$ enters $Y$ at $t=0$ and exits at time $t=T$. Let $y(t)$ be a trajectory in the same unstable leaf as $x(t)$. Then, there is $\gamma_1>0$ such that
$$d(x(T),y(T))\leq (T+1)^{\gamma_1}d(x(0),y(0)).$$
\end{lem}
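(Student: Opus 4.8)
The plan is to work in the linearizing coordinates of (A4), writing a point of $Y$ as $x=(u,v)\in\R\oplus\R^{d-1}$ with $p=0$, so that the $u$–axis is the unstable direction and $\R^{d-1}$ the stable directions. Since the flow is generated by $\dot x=\psi(\norm x)Ax$ with $A=\gamma\id_u\oplus(-\beta\id_s)$, the two coordinate blocks decouple multiplicatively: writing $s(t):=\norm{v(t)}$ and $r(t):=\norm{x(t)}$, one gets the exact scalar equations
$$\dot u=\gamma\,\psi(r)\,u,\qquad \dot s=-\beta\,\psi(r)\,s.$$
We may assume $u(t)>0$ throughout, since $\{u=0\}$ is the flow–invariant stable manifold. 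Because $x(t),y(t)$ lie on a common unstable leaf and are close, the distance $d(x(T),y(T))$ is comparable, up to uniform constants coming from the bounded geometry of unstable leaves and H\"older control of $J^u$ (cf. (C2)--(C3)), to $\norm{D\varphi_T|_{E^u_g(x(0))}}\,d(x(0),y(0))$; thus it suffices to bound the unstable expansion $\norm{D\varphi_T|_{E^u}}$ by a power of $T+1$, and I will absorb all such uniform multiplicative constants at the end.

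First I would estimate the infinitesimal unstable expansion rate. With $\Phi(x)=\psi(\norm x)Ax$, the instantaneous log–growth rate of a unit vector $w\in E^u(x)$ is $\langle D\Phi(x)w,w\rangle$, which is bounded above by $\gamma\psi(r)+\gamma\psi'(r)r$ after using $\abs{u}\le r$ in the radial correction term. The crucial observation is that $\tfrac{d}{dt}\log u=\gamma\psi(r)$ is an exact identity, so $\gamma\int_0^T\psi(r)\,dt=\log\!\big(u(T)/u(0)\big)$; moreover $\psi'(r)r=\alpha\psi(r)$ for $r\le r_0$, while the annulus $r_0\le r\le r_1$ is crossed in uniformly bounded time, so $\int_0^T\gamma\psi'(r)r\,dt\le \alpha\log\!\big(u(T)/u(0)\big)+O(1)$. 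Integrating the rate therefore gives
$$\norm{D\varphi_T|_{E^u}}\le C\Big(\frac{u(T)}{u(0)}\Big)^{1+\alpha}\le C\Big(\frac{r_1}{u(0)}\Big)^{1+\alpha},$$
since $u(T)\le r(T)=r_1$. It remains to bound $u(0)$ below by a negative power of $T$.

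For this I would integrate the decoupled equations explicitly. With $\Psi(t):=\int_0^t\psi(r)\,ds$ one has $u(t)=u(0)e^{\gamma\Psi(t)}$ and $s(t)=s(0)e^{-\beta\Psi(t)}$, so $r(\Psi)^2=u(0)^2e^{2\gamma\Psi}+s(0)^2e^{-2\beta\Psi}$ is convex in $\Psi$ and attains a minimum $r_*$ at some $\Psi_*$; a direct computation gives $r_*\le C\,u(0)^{\beta/(\beta+\gamma)}$ (using $s(0)\le r_1$). Over a $\Psi$–interval of fixed length $\tfrac1{\beta+\gamma}$ about $\Psi_*$ the radius stays $\le C'r_*$, and there $\psi(r)\le (C'r_*)^\alpha$; since $T=\int_0^{\Psi(T)}\psi(r)^{-1}\,d\Psi$, restricting the integral to this interval yields $T\ge c\,r_*^{-\alpha}\ge c'\,u(0)^{-\alpha\beta/(\beta+\gamma)}$, hence $u(0)\ge c''(T+1)^{-(\beta+\gamma)/(\alpha\beta)}$. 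Substituting into the previous display gives $\norm{D\varphi_T|_{E^u}}\le C(T+1)^{(1+\alpha)(\beta+\gamma)/(\alpha\beta)}$, and after enlarging the exponent to absorb the constant $C$ (the flow has bounded derivative, so the bounded–$T$ range is harmless) we obtain the claim with $\gamma_1=(1+\alpha)(\beta+\gamma)/(\alpha\beta)$.

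The main obstacle is the third step: one must control genuine unstable leaves, which are not the $u$–axis, so the clean scalar identities above hold only for the coordinate evolution and must be combined with bounded distortion of $J^u$ along the arc and with the closest–approach estimate for $r_*$; keeping the coupling between $u$ and $s$ under control throughout the passage is where the care is needed. The exponent produced this way is certainly not optimal — comparison with the forward estimate \eqref{ineqdist} suggests the sharp growth is of order $(T+1)^{1+1/\alpha}$ — but since the statement only requires the existence of some $\gamma_1>0$, the bound above suffices.
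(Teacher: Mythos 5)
Your proposal is essentially correct, and after the shared first step it takes a genuinely different route from the paper. Both arguments start from the same derivative formula (your $\langle D\Phi(x)w,w\rangle=\psi(r)\langle Aw,w\rangle+\psi'(r)r\langle \hat x,w\rangle\langle A\hat x,w\rangle$ is exactly the formula the paper imports from \cite[(7.29)--(7.30)]{CDP}), and both reduce to bounding $\gamma(1+\alpha)\int_0^T\psi(\norm{x})\,dt$. From there the paper proceeds by a pointwise-in-time estimate: it splits the passage at the time $T_1$ where $\tan\theta=1$, invokes \cite[Lemma 5.4]{Z} to get $\norm{x(t)}^{\alpha}\lesssim (1+t)^{-1}$ before $T_1$, and an ODE comparison for $\abs{x_u}$ to get $\norm{x(t)}^{\alpha}\lesssim(1+T-t)^{-1}$ after $T_1$, then integrates both logs. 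You instead observe the exact identity $\gamma\int_0^T\psi(r)\,dt=\log\bigl(u(T)/u(0)\bigr)$ and convert the whole problem into a lower bound on $u(0)$ in terms of $T$, obtained from a closest-approach computation ($r_*\lesssim u(0)^{\beta/(\beta+\gamma)}$ together with $T\gtrsim r_*^{-\alpha}$). Your route is more self-contained (no appeal to \cite[Lemma 5.4]{Z} or the $T_1$ splitting) and, amusingly, yields a better exponent: $(1+1/\alpha)(1+\gamma/\beta)$ versus the paper's $(1+1/\alpha)\bigl(\tfrac{2\gamma}{\beta-\gamma}+2^{\alpha/2}\bigr)$, which is closer to the conjecturally sharp $1+1/\alpha$ and would still satisfy the constraint $\gamma_1<\gamma_2+1$ needed later in the proof of Theorem \ref{mainresult2}.

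Two caveats, both of which you flag yourself. First, the bound $\langle\hat x,w\rangle\langle A\hat x,w\rangle\leq\gamma$ is \emph{not} true for an arbitrary unit vector $w$ when $\beta>3\gamma$ (take $w$ with comparable $u$- and $s$-components); it holds because $w\in E^u_g(x)$ lies in the invariant unstable cone, which is exactly what the paper cites from \cite[Section 7.3]{CDP} to justify \eqref{inner}. Your phrase ``using $\abs{u}\leq r$'' only covers the case $w=e_u$, so this cone input must be made explicit. Second, the passage from the vector-expansion bound to the distance bound $d(x(T),y(T))\leq(T+1)^{\gamma_1}d(x(0),y(0))$ between two points of the leaf is asserted in both your write-up and the paper's; neither integrates the derivative along the connecting arc, so you are no worse off than the original there. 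Neither point is a fatal gap.
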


\begin{proof}
    Let $v(0)\in E^u_g(x(0))$ and $v(T)=D_{x(0)}\phi_T(v(0))$, where $\phi_t$ is the flow generated by the perturbed system of differential equations (see Section \ref{Setting}). By \cite[Formulas (7.29) and (7.30)]{CDP}, we have
    $$\log\left(\dfrac{\norm{v(T)}}{\norm{v(0)}}\right)=\int_0^T\norm{x}^{\alpha}(\alpha\inner{\hat{v}}{\hat{x}}\inner{\hat{v}}{A\hat{x}}+\inner{\hat{v}}{A\hat{v}})dt,$$

    where $\hat{x}=x/\norm{x}$ and $\hat{v}=v/\norm{v}$. Since $v(t)$ is an unstable vector, its angle $\rho(t)$ with respect to $E^u_g$ is identically zero, and by the computations in \cite[Section 7.3]{CDP} we have
    \begin{equation}\label{inner}
    \inner{\hat{v}}{\hat{x}}\inner{\hat{v}}{A\hat{x}}\leq \gamma\quad\text{and}\quad\inner{\hat{v}}{A\hat{v}}=\gamma,
    \end{equation}
    where $\gamma$ is as in condition (A4) in Section \ref{Setting}. Thus,

    \begin{equation}\label{Q3}
        \log\left(\dfrac{\norm{v(T)}}{\norm{v(0)}}\right)\leq\gamma(\alpha+1)\int_0^T\norm{x}^{\alpha}dt.
    \end{equation}

Let $\theta(t)$ be the positive angle between $x(t)$ and $E^u(0)$ and write $x=x_s+x_u$. Then $\tan\theta=\norm{x_s}/\abs{x_u}$. Let $T_1$ denote the time for which $\tan\theta(T_1)=1$. (It follows from \cite[Lemma 7.1]{CDP} that $\tan\theta$ is strictly decreasing.) 

By \cite[Lemma 5.4]{Z} we have that for $t\leq T_1$, if we denote $\chi:=\alpha\dfrac{\beta-\gamma}{2}$ then
$$\norm{x(t)}^{\alpha}\leq\frac{1}{r_1^{-\alpha}+\chi t}\leq \frac{Q_1}{1+t}.$$

Observe that we can choose $Q_1:=\frac{2}{\alpha(\beta-\gamma)}$. This follows from the fact that we can choose any $Q_1\geq\frac{1+t}{r_1^{-\alpha}+\chi t}$ and the function $F(s)=\frac{1+s}{r_1^{-\alpha}+\chi s}$ is increasing (for $r_1$ small enough) with horizontal asymptote $y=1/\chi$. Hence, we conclude that for $t\leq T_1$,
\begin{equation}\label{Q1}
    \norm{x(t)}^{\alpha}\leq\dfrac{2}{\alpha(\beta-\gamma)}\cdot\dfrac{1}{1+t}.
\end{equation}

Now we have to bound $\norm{x(t)}^{\alpha}$ for $T_1\leq t\leq T$. Since $\tan\theta$ is decreasing, we have that for $t\geq T_1$, $\tan\theta\leq 1$ and therefore $\norm{x_s}\leq\abs{x_u}$. Thus,

$$\norm{x}^{\alpha}=(\norm{x_s}^2+\abs{x_u}^2)^{\alpha/2}\leq 2^{\alpha/2}\abs{x_u}^{\alpha},$$

and therefore it suffices to bound $\abs{x_u}^{\alpha}$. Since $\dot{x}=\norm{x}^{\alpha} Ax$, when we restrict to the $u$-coordinate we have $x_u'=\gamma\norm{x}^{\alpha}x_u$. Since $x_u$ does not vanish, we can consider separately the cases where it is positive, and where it is negative. Then, we obtain $\abs{x_u}'=\gamma\norm{x}^{\alpha}\abs{x_u}$ and it follows that $\abs{x_u}'\geq\gamma\abs{x_u}^{\alpha+1}$. Hence,
$$\left(-\frac{1}{\alpha}\abs{x_u}^{-\alpha}\right)'=\abs{x_u}^{-\alpha-1}\abs{x_u}'\geq\gamma.$$

Integrating over $t\leq\tau\leq T$ we get $-\dfrac{1}{\alpha}(\abs{x_u(T)}^{-\alpha}-\abs{x_u(t)}^{-\alpha})\geq\gamma(T-t)$, and therefore
$$\abs{x_u(t)}^{-\alpha}\geq \abs{x_u(T)}^{-\alpha}+\alpha\gamma(T-t)\geq r_1^{-\alpha}+\alpha\gamma(T-t)$$

as $\abs{x_u(T)}\leq\norm{x(T)}=r_1$. Then, we have that
$$\abs{x_u(t)}^{\alpha}\leq\frac{1}{r_1^{-\alpha}+\alpha\gamma(T-t)}\leq\frac{Q_2}{1+T-t}.$$

Observe that we can choose $Q_2:=\frac{1}{\alpha\gamma}$. This follows from the fact that we can choose any $Q_2\geq\frac{1+T-t}{r_1^{-\alpha}+\alpha\gamma(T-t)}$ and the function $G(s)=\frac{1+s}{r_1^{\alpha}+\alpha\gamma s}$ is increasing (for $r_1$ small enough) with horizontal asymptote $y=1/\alpha\gamma$. Hence, we conclude that for $T_1\leq t\leq T$,
\begin{equation}\label{Q2}
    \norm{x(t)}^{\alpha}\leq \dfrac{2^{\alpha/2}}{\alpha\gamma}\cdot\dfrac{1}{1+T-t}.
\end{equation}

Putting together \eqref{Q3}, \eqref{Q1} and \eqref{Q2} we get
\begin{align*}
    \log\left(\dfrac{\norm{v(T)}}{\norm{v(0)}}\right)&\leq \gamma(\alpha+1)\left(\int_0^{T_1}\norm{x}^{\alpha}dt+\int_{T_1}^T\norm{x}^{\alpha}dt\right)\\
    &\leq\gamma(\alpha+1)\left(\dfrac{2}{\alpha(\beta-\gamma)}\int_0^{T_1}\dfrac{dt}{1+t}+\dfrac{2^{\alpha/2}}{\alpha\gamma}\int_{T_1}^T\dfrac{dt}{1+T-t}\right)\\
    &=\gamma(\alpha+1)\left(\dfrac{2}{\alpha(\beta-\gamma)}\log(1+T_1)+\dfrac{2^{\alpha/2}}{\alpha\gamma}\log(1+T-T_1)\right)\\
    &\leq\gamma(\alpha+1)\left(\dfrac{2}{\alpha(\beta-\gamma)}+\dfrac{2^{\alpha/2}}{\alpha\gamma}\right)\log(T+1).
\end{align*}

Thus, for a trajectory $y\in W^u_x$ we have that $d(x(T),y(T))\leq (T+1)^{\gamma_1}d(x(0),y(0))$, where $\gamma_1=\gamma(\alpha+1)\left(\frac{2}{\alpha(\beta-\gamma)}+\frac{2^{\alpha/2}}{\alpha\gamma}\right)$.\qedhere
\end{proof}

Similarly as in \eqref{ineqdist}, it follows from Lemma \ref{newineq} that for an unstable curve $\sigma$ with endpoints $x,y$ such that $g^n(\sigma)$ enters the slow down ball, and $g^m(\sigma)$ exists it, we have
\begin{equation}\label{ineqdist2}
d(g^m(x),g^m(y))\leq C_4(m-n)^{\gamma_1}d(g^n(x),g^n(y)).
\end{equation}

\begin{lem}\label{lemalargos}
Let $\sigma\subset W$ be a connected component of $\{\tau=N\}\cap W$. Let $x,y$ be its endpoints. Assume that the curve enters the slow down ball at time $n$ under iterates of $g$, and exits it at time $m$, where $n<m<N$. Then, there are constants $C_5,C_6>0$ and $\gamma_1,\gamma_2>0$ which do not depend on $n,m,N$ such that
\begin{equation}\label{expslowdown}
C_5(m-n)^{\gamma_2}\ell(g^n(\sigma))\leq\ell(g^m(\sigma))\leq C_6(m-n)^{\gamma_1}\ell(g^n(\sigma)).
\end{equation}
\end{lem}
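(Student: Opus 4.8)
The plan is to deduce both inequalities from the distance estimates \eqref{ineqdist} and \eqref{ineqdist2} by comparing, for a curve sitting inside or immediately after the slow-down ball, its length with the distance between its endpoints. One direction is free: $d(g^k(x),g^k(y))\le\ell(g^k(\sigma))$ always holds. The content is the reverse comparison, which is available because near the fixed point the relevant unstable curves are almost straight.

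First I would isolate this \emph{quasi-straightness} estimate. Since $E^u_g$ depends continuously on the base point and extends continuously to $p=0$, for $r_1$ small every $z\in B(0,r_1)\cap\Lambda_g$ satisfies $\angle(E^u_g(z),E^u_g(0))<\pi/4$. As an unstable curve is tangent to $E^u_g$ at each of its points, parametrizing such a curve $\zeta$ by arc length and integrating the inner product of its unit tangent with a fixed unit vector $e\in E^u_g(0)$ shows that $d(\text{endpoints of }\zeta)\ge(\cos\tfrac\pi4)\,\ell(\zeta)$. Thus there is $c_1>0$, independent of the curve and of $n,m,N$, with $c_1\,\ell(\zeta)\le d(\text{endpoints of }\zeta)\le\ell(\zeta)$ for every unstable curve $\zeta$ contained in a fixed small neighborhood of $p$. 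By hypothesis the whole of $g^n(\sigma)$ lies in $B(0,r_1)$, and $g^m(\sigma)=g(g^{m-1}(\sigma))$ with $g^{m-1}(\sigma)\subset B(0,r_1)$, so $g^m(\sigma)$ still lies in a small neighborhood of $p$ for $r_1$ small; hence the comparison applies to both $g^n(\sigma)$ and $g^m(\sigma)$.

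Granting this, the two bounds drop out. For the lower bound I would chain $\ell(g^m(\sigma))\ge d(g^m(x),g^m(y))$, then \eqref{ineqdist} in the form $d(g^m(x),g^m(y))\ge (C')^{-1}(m-n)^{1+1/\alpha}\,d(g^n(x),g^n(y))$, and finally $d(g^n(x),g^n(y))\ge c_1\,\ell(g^n(\sigma))$, obtaining the left inequality of \eqref{expslowdown} with $\gamma_2=1+\tfrac1\alpha$ and $C_5=c_1/C'$. For the upper bound I would chain $\ell(g^m(\sigma))\le c_1^{-1}d(g^m(x),g^m(y))$, then \eqref{ineqdist2}, then $d(g^n(x),g^n(y))\le\ell(g^n(\sigma))$, obtaining the right inequality with the same $\gamma_1$ as in Lemma \ref{newineq} and $C_6=C_4/c_1$.

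The one real obstacle is the quasi-straightness step: I must ensure the comparison constant $c_1$ is uniform over all curves $\sigma$ and all passages — that is, independent of $n,m,N$ — and that the entire curve, not just its endpoints, stays in the region where $E^u_g$ is nearly constant. Both are secured by choosing $r_1$ small (as already demanded in the main theorems) together with the convention of this section that each iterate $g^k(\sigma)$ lies entirely inside or entirely outside $B(0,r_1)$. The only additional check is that $g^m(\sigma)$, having just left the ball, still lies in a neighborhood of $p$ on which the angle bound holds; this is fine because $g$ expands lengths near $p$ by at most a bounded factor, so $g^m(\sigma)\subset B(0,Cr_1)$ for a fixed $C$, and $B(0,Cr_1)$ is still small.
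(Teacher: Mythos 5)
Your proof is correct and follows the same skeleton as the paper's: both reduce \eqref{expslowdown} to the endpoint--distance estimates \eqref{ineqdist} and \eqref{ineqdist2} by showing that, for the relevant iterates of $\sigma$, the leaf length is comparable to the distance between the endpoints. The difference lies in how that comparison is justified. The paper notes that by the Markov property $g^N(\sigma)$ is a full unstable disk $V^u\in\Gamma^u$, so every $g^j(\sigma)$ with $0\le j\le N$ is a backward iterate $g^{-(N-j)}(V^u)$; such backward iterates stay uniformly small, and one obtains a single constant $K$ with $K^{-1}d(\cdot,\cdot)\le\ell(\cdot)\le K\,d(\cdot,\cdot)$ valid for all of these curves simultaneously (inequality \eqref{ineqlargodistance}). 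You instead prove the comparison only at the two times $n$ and $m$ where it is actually used, exploiting the quasi-straightness of unstable curves near $p$. Your route is more local and self-contained; the paper's is more global and is reused for other iterates later in the section. The one step in your argument that deserves more care is the uniformity of the angle bound in $r_1$: the map $g$, and with it $E^u_g$ and $\Lambda_g$, changes as $r_1$ shrinks, so ``continuity of $E^u_g$ at $p$'' does not by itself yield a modulus of continuity uniform in $r_1$. This is repaired by the cone-invariance analysis of \cite[Section 7.3]{CDP}, which shows that inside the slow-down ball the unstable direction remains in a fixed cone about $E^u(p)$ independently of $r_1$; with that reference supplied, your argument is complete.
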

\begin{proof}
We follow the idea of the proof in \cite[Lemma 6.9]{PSS}. Since we are choosing $\diam\PP$ small, by uniform continuity of $g^{-1}|_{\Lambda_g}$, there is a constant $K\geq 1$ such that for every $V^u\in\Gamma^u$ with endpoints $w_1,w_2$ we have
\begin{equation}\label{ineqlargodistance}
K^{-1}d(g^{-k}(w_1),g^{-k}(w_2))\leq\ell(g^{-k}(V^u))\leq Kd(g^{-k}(w_1),g^{-k}(w_2))
\end{equation}
for every $k\geq 0$. In particular, this holds for the curves $g^{N-k}(\sigma)$ and its endpoints $g^{N-k}(x)$, $g^{N-k}(y)$.

For the lower bound, set $\gamma_2=1+\frac{1}{\alpha}$. Since $n<m<N$, combining \eqref{ineqdist} and \eqref{ineqlargodistance} we have
\begin{align*}
    \ell(g^m(\sigma))&\geq K^{-1}d(g^m(x),g^m(y))\\
    &\geq K^{-1}(C')^{-1}(m-n)^{\gamma_2}d(g^n(x),g^n(y))\\
    &\geq K^{-2}(C')^{-1}(m-n)^{\gamma_2}\ell(g^n(\sigma)).
\end{align*}
Observe that we can choose $C_5:=K^{-2}(C')^{-1}\leq 1$.

For the upper bound, set $\gamma_1$ as in Lemma \ref{newineq}. Now combining \eqref{ineqdist2} and \eqref{ineqlargodistance} we have
\begin{align*}
    \ell(g^m(\sigma))&\leq K d(g^m(x),g^m(y))\\
    &\leq KC_4(m-n)^{\gamma_1}d(g^n(x),g^n(y))\\
    &\leq K^2C_4(m-n)^{\gamma_1}\ell(g^n(\sigma)).
\end{align*}
Finally, we can choose $C_6:=K^2C_4\geq 1$.
\end{proof}

\subsection{An upper bound for the tail of the return time}

Let $\sigma\subset W$ be a connected component of $\{x\in W:\tau(x)=N\}$. As a consequence of the Markov property (C1b) in the definition of Young diffeomorphism, $g^N(\sigma)$ is a full unstable curve on $P$, and so $\ell(g^N(\sigma))\leq\sup_{V^u}\ell(V^u)=:M$.

Following the line of arguments in \cite{PSS}, we study the trajectory of $\sigma$ depending on how many times it enters the slow down ball, and how long these pieces of trajectory lie inside and outside the ball. 

Choose numbers $k=k(\sigma)$, $p=p(\sigma)$ and finite collections of nonnegative numbers $\{k_i\}_{i=1}^p$ and $\{l_i\}_{i=0}^p$ such that:

\begin{itemize}
    \item $\sum_{i=1}^pk_i=k$ and $\sum_{i=0}^pl_i=N-k$; and
    \item the trajectory of the set $\sigma$ under $g$ consecutively spends $l_i$-times outside $B(0,r_1)$ and $k_i$-times inside $B(0,r_1)$.
\end{itemize}

In order to have lighter notation in the inequalities, we recall that $Y=B(0,r_1)$, $\gamma_2=1+\frac{1}{\alpha}$, and define $D_r=\sum_{i=1}^rk_i$, $E_r=\sum_{i=0}^rl_i$ and $L(r)=\ell(g^r(\sigma))$. The following lemma is a simple consequence of \eqref{unifexp} and \eqref{expslowdown}.

\begin{lem}\label{lemainduccion}
For each $1\leq j\leq p$, we have
\begin{equation}\label{UpperBound}
    L(N)\geq C_3^j\nu^{\sum_{i=0}^{j-1}l_{p-i}-1}C_5^{j}\left(\prod_{i=0}^{j-1}k_{p-i}\right)^{\gamma_2}L(E_{p-j}+D_{p-j}+1).
\end{equation}
\end{lem}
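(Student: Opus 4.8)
## Proof Plan for Lemma \ref{lemainduccion}

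The plan is to prove the inequality by induction on $j$, tracking the length $L(r) = \ell(g^r(\sigma))$ as the trajectory of $\sigma$ alternates between excursions outside and inside the slow-down ball $Y = B(0,r_1)$. The two elementary length estimates we have in hand are \eqref{unifexp}, which gives uniform expansion by the factor $C_3\nu^{r-s}$ over any stretch of $r-s$ consecutive steps spent outside $Y$, and the lower bound in \eqref{expslowdown}, namely $\ell(g^m(\sigma)) \geq C_5(m-n)^{\gamma_2}\ell(g^n(\sigma))$, which governs a single passage through $Y$ lasting $m-n$ steps. Reading the trajectory backwards from time $N$, the $j$ most recent blocks consist, from the outermost inward, of a stretch of $l_p$ steps outside, then $k_p$ steps inside, then $l_{p-1}$ outside, $k_{p-1}$ inside, and so on down to the block indexed $p-j+1$; the quantity $E_{p-j}+D_{p-j}+1$ is exactly the time at which this collection of $j$ blocks begins.

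\textbf{Base case.} For $j=1$, I would start at time $E_{p-1}+D_{p-1}+1$, apply \eqref{unifexp} across the final outside-stretch of length $l_p$ to pick up a factor $C_3\nu^{l_p - 1}$, and then apply the lower bound of \eqref{expslowdown} across the final inside-passage of length $k_p$ to pick up $C_5 k_p^{\gamma_2}$. Composing these two estimates yields $L(N) \geq C_3 \nu^{l_p - 1} C_5 k_p^{\gamma_2} L(E_{p-1}+D_{p-1}+1)$, which is precisely \eqref{UpperBound} with $j=1$, since the sum $\sum_{i=0}^{0} l_{p-i} = l_p$ and the product $\prod_{i=0}^{0} k_{p-i} = k_p$.

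\textbf{Inductive step.} Assuming \eqref{UpperBound} holds for some $j$, I would bound the factor $L(E_{p-j}+D_{p-j}+1)$ from below by peeling off one more pair of blocks, exactly as in the base case: first expand over the outside-stretch of length $l_{p-j}$ using \eqref{unifexp} to gain $C_3\nu^{l_{p-j}-1}$, then expand over the inside-passage of length $k_{p-j}$ using \eqref{expslowdown} to gain $C_5 k_{p-j}^{\gamma_2}$, arriving at the earlier time $E_{p-j-1}+D_{p-j-1}+1$. Substituting this into the inductive hypothesis multiplies the accumulated constant $C_3^j C_5^j$ by $C_3 C_5$, augments the exponent of $\nu$ from $\sum_{i=0}^{j-1} l_{p-i} - 1$ to $\sum_{i=0}^{j} l_{p-i} - 1$, and extends the product to $\prod_{i=0}^{j} k_{p-i}$, which is \eqref{UpperBound} for $j+1$.

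\textbf{Main obstacle.} The calculation is routine; the only real care needed is bookkeeping. First, the index arithmetic: I must verify that $E_{p-j}+D_{p-j}+1$ genuinely marks the start of the $j$-th-from-last block, so that the stretches being expanded are the correct ones and the partial sums telescope as claimed. Second, the exponents on $\nu$: each application of \eqref{unifexp} costs one $-1$ in the exponent, but the inductive step absorbs the full length $l_{p-j}$ while only subtracting a single further $1$, so I should confirm that the stated exponent $\sum_{i=0}^{j-1} l_{p-i} - 1$ (rather than $\sum_{i=0}^{j-1}(l_{p-i}-1)$) is consistent with applying \eqref{unifexp} blockwise rather than step-by-step. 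The resolution is that \eqref{unifexp} is applied once per outside-block over its full length, contributing one factor of $C_3$ and one $\nu^{-1}$ per block, which matches the $C_3^j$ and the single global $-1$ in the $\nu$-exponent; making this alignment explicit is the one point where I would slow down.
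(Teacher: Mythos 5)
Your overall strategy is exactly the paper's: induct on $j$, and at each step peel off one outside-stretch using \eqref{unifexp} and one passage through $Y$ using the lower bound in \eqref{expslowdown}. The one genuine problem is the exponent issue you yourself flagged, and your proposed resolution does not actually resolve it. In the inductive step the relevant time gap is from the exit time $E_{p-j-1}+D_{p-j}+1$ to the next entry time $E_{p-j}+D_{p-j}+1$, which equals exactly $l_{p-j}$; so \eqref{unifexp} contributes $C_3\nu^{l_{p-j}}$ with \emph{no} loss of $1$ in the exponent. The single $-1$ in \eqref{UpperBound} arises only in the base case, where the gap from $D_p+E_{p-1}+1$ to $N=D_p+E_p$ is $l_p-1$. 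Your version of the inductive step, which gains $\nu^{l_{p-j}-1}$ (``one $\nu^{-1}$ per block''), accumulates to the exponent $\sum_{i=0}^{j-1}l_{p-i}-j$; since $\nu>1$ this is a strictly weaker lower bound than the claimed $\nu^{\sum_{i=0}^{j-1}l_{p-i}-1}$, so the induction as written closes only for a weaker inequality and does not prove \eqref{UpperBound}. Once you compute the block lengths from the definitions of $E_r$ and $D_r$ and use the full gap $l_{p-j}$ in the inductive step, the argument goes through exactly as in the paper.
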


\begin{proof}
We prove Lemma \ref{lemainduccion} by an induction argument (up to $p$). We first show that $\eqref{UpperBound}$ holds for $j=1$. Observe that by the choice of the $l_i$'s, the iterates of $\sigma$ under $g$ spend $l_p$ times outside $Y$ right before returning to $P$ (at time $N$), so by \eqref{unifexp} we have
\begin{equation}\label{casobase1}
    L(N)\geq C_3\nu^{l_p-1}L(D_p+E_{p-1}+1).
\end{equation}

Now since $\sigma$ exits $Y$ at time $D_p+E_{p-1}+1$ and before that it enters at time $E_{p-1}+D_{p-1}+1$, by \eqref{expslowdown}, we have
\begin{equation}\label{casobase2}
    L(E_{p-1}+D_p+1)\geq C_5k_p^{\gamma_2}L(E_{p-1}+D_{p-1}+1).
\end{equation}

Putting together \eqref{casobase1} and \eqref{casobase2}, we obtain
\begin{equation*}
    L(N)\geq C_3\nu^{l_p-1}C_5k_p^{\gamma_2}L(E_{p-1}+D_{p-1}+1),
\end{equation*}
as desired for the case $j=1$.

For the inductive argument, assume \eqref{UpperBound} holds for some $1\leq j\leq p-1$. We show that it also holds for $j+1$. Observe that $E_{p-j}+D_{p-j}+1$ is a time when $\sigma$ enters $Y$. Then, the iterates of $\sigma$ between the times $E_{p-j}+D_{p-j}$ and $E_{p-j-1}+D_{p-j}+1$ are all outside $Y$. By \eqref{unifexp}, we get
\begin{equation}\label{casoinductivo1}
    L(E_{p-j}+D_{p-j}+1)\geq C_3\nu^{l_{p-j}}L(E_{p-j-1}+D_{p-j}+1).
\end{equation}

Now, since $\sigma$ exits $Y$ at time $E_{p-j-1}+D_{p-j}+1$ and before that it entered at time $E_{p-j-1}+D_{p-j-1}+1$, by \eqref{expslowdown} we have
\begin{equation}\label{casoinductivo2}
    L(E_{p-j-1}+D_{p-j}+1)\geq C_5k_{p-j}^{\gamma_2}L(E_{p-(j+1)}+D_{p-(j+1)}+1).
\end{equation}

Putting together the induction hypothesis for $j$ \eqref{UpperBound}, \eqref{casoinductivo1} and \eqref{casoinductivo2} we obtain
\begin{equation*}
    L(N)\geq C_3^{j+1}\nu^{\sum_{i=0}^jl_{p-i}-1}C_5^{j+1}\left(\prod_{i=0}^jk_{p-i}\right)^{\gamma_2}L(E_{p-(j+1)}+D_{p-(j+1)}+1),
\end{equation*}

which concludes the inductive step and the proof of Lemma \ref{lemainduccion}.
\end{proof}

Observe that for $j=p$ in Lemma \ref{lemainduccion} we have the estimate
$$L(N)\geq C_3^p\nu^{\sum_{i=1}^pl_i-1}C_5^{p}\left(\prod_{i=1}^pk_i\right)^{\gamma_2}L(l_0+1).$$

Using \eqref{unifexp} one more time, we get
$\ell(g^N(\sigma))\geq C_3^{p+1}\nu^{N-k}C_5^{p}\left(\prod_{i=1}^pk_i\right)^{\gamma_2}\ell(\sigma)$.

Since $\prod k_i\geq k$ (as $k_i\geq 2$, make $r_1$ smaller if necessary), we have that
$$\ell(\sigma)\leq C_3^{-(p+1)}\nu^{-(N-k)}(C_5)^{-p}k^{-\gamma_2}M.$$

Recall that we can choose the constants $C_3\leq1$ and $C_5\leq1$. So $B:=(C_3C_5)^{-1}\geq1$. By setting $C_7:=C_3^{-1}M$ we have that 
$$\ell(\sigma)\leq C_7B^p\nu^{-(N-k)}k^{-\gamma_2}.$$

Observe that by the choice of $Q$ we have 
\begin{equation}\label{choiceofQ}
N=k+\sum_{i=0}^{p}l_i\geq k+(p+1)Q,
\end{equation}
and so $p\leq\dfrac{N-k}{Q}$. Thus, 
$$B^p=e^{p\log B}\leq e^{\frac{N-k}{Q}\log B}<e^{\varepsilon_0(N-k)}$$
for sufficiently small $\varepsilon_0>0$ and choosing $Q$ large. Then,

$$\ell(\sigma)\leq C_7e^{(\varepsilon_0-\log\nu)(N-k)}k^{-\gamma_2}.$$

\begin{lem}\label{lemcard}
Define $\mathcal{S}_{k,N,p}:=\{\sigma\subset W:\tau(\sigma)=N,k(\sigma)=k,p(\sigma)=p\}$. Then, there are $0<h<h_{\text{top}}(g)$, $0<\varepsilon_0<h_{\text{top}}(g)-h$ and $C_8>0$ such that
$$\#\mathcal{S}_{k,N,p}\leq C_8\frac{1}{p^2}e^{(h+\varepsilon_0)(N-k)}.$$
\end{lem}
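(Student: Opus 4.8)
The plan is to bound $\#\mathcal{S}_{k,N,p}$ by a symbolic counting argument, exploiting that essentially all of the topological complexity is carried by the pieces of trajectory lying \emph{outside} the slow-down ball $Y=B(0,r_1)$, while the passages through $Y$ (which shadow the fixed point $p$) contribute nothing to the entropy. Since the connected components of $\{\tau=N\}\cap W$ carry hyperbolic product structure and a fixed first-return itinerary, each $\sigma\in\mathcal{S}_{k,N,p}$ is determined by an admissible word of length $N$ in the Markov partition $\PP$ for $g$. I would split this word along the $p$ excursions into $Y$, writing it as $p+1$ ``outside'' runs (symbols from partition elements disjoint from $Y$) of lengths $l_0,\dots,l_p$ with $\sum l_i=N-k$, interleaved with $p$ ``inside'' runs of lengths $k_1,\dots,k_p$ with $\sum k_i=k$. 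The count then factors as (number of interleaving patterns) $\times$ (number of admissible outside runs) $\times$ (number of admissible inside codes), and the goal is to show each factor is controlled by $N-k$ alone.

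The conceptual heart of the argument is producing a \emph{strict} entropy gap for the outside runs. Let $h$ be the topological entropy of the survivor subsystem consisting of orbits that avoid $Y$ for all time. By the conjugacy of Section \ref{prelim}, $g$ is topologically conjugate to $f$, whose restriction to $\Lambda$ is a topologically mixing hyperbolic set; its coding is a mixing subshift of finite type with irreducible transition matrix $T$, and $\log\lambda(T)=h_{\text{top}}(g)$, where $\lambda(\cdot)$ denotes the Perron eigenvalue. Deleting the finitely many symbols whose partition elements meet $Y$ leaves a \emph{proper} principal submatrix $T_O$, and Perron--Frobenius theory gives $\lambda(T_O)<\lambda(T)$, whence $h=\log\lambda(T_O)<h_{\text{top}}(g)$. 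Consequently the number of admissible outside words of length $l$ is at most $Ce^{hl}$, and multiplying over the $p+1$ runs yields at most $C^{p+1}e^{h(N-k)}$ admissible outside itineraries.

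It remains to control the other two factors and, crucially, to check that no factor depending on $k$ survives; this $k$-independence is the more delicate point, and together with the entropy gap it is where I expect the real work to lie. For the inside runs I would use that near the hyperbolic fixed point $p$ the passage through $Y$ is monotone in the unstable coordinate: each excursion enters near the stable manifold, shadows $p$, and exits through $\partial Y$ essentially on $W^u(p)$, so its exit point is all but independent of $k_i$; hence the symbolic code of each excursion, and even the length $k_i$ itself, is pinned by the outside data up to a bounded multiplicity $a_0$. This is precisely what prevents a free composition-of-$k$ factor, so the inside runs contribute at most $a_0^{\,p}$ and the interleaving reduces to the compositions of $N-k$ into $p+1$ parts each $\ge Q$, of which there are at most $\binom{N-k}{p}$. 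Invoking \eqref{choiceofQ}, that is $p\le (N-k)/Q$, the product $C^{p+1}a_0^{\,p}\binom{N-k}{p}$ is at most $e^{\varepsilon(N-k)}$ once $Q$ is chosen large, since $\tfrac1Q\log C$, $\tfrac1Q\log a_0$ and the binary entropy $H(1/Q)$ all tend to $0$ as $Q\to\infty$. Combining gives $\#\mathcal{S}_{k,N,p}\le C\,e^{(h+\varepsilon_0')(N-k)}$ with $\varepsilon_0'<\varepsilon_0<h_{\text{top}}(g)-h$. Finally, reserving a sliver of the entropy gap, I would absorb the extra $p^2$ using $2\log p\le 2\log(N-k)=o(N-k)$, so that $p^2\le C\,e^{(\varepsilon_0-\varepsilon_0')(N-k)}$ and therefore $\#\mathcal{S}_{k,N,p}\le C_8\,p^{-2}e^{(h+\varepsilon_0)(N-k)}$, as claimed.
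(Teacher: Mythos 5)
Your decomposition into $p+1$ outside runs and $p$ inside runs, the use of $p+1\le (N-k)/Q$ from \eqref{choiceofQ}, and the absorption of $p^2$ into a sliver of the entropy gap all match the paper's proof. For the outside runs the paper proceeds differently: it over-counts each run by the full language $\LL_{l_i}$, glues the $p+1$ runs into a single word of length $N-k+r$ via the specification inequality $\#\LL_m\#\LL_n\le(r+1)\#\LL_{m+n+r}$, and bounds that single count by $e^{h(N-k)}$ with $h<h_{\text{top}}(g)$. Your Perron--Frobenius route to the entropy gap is a legitimate alternative, but note a mismatch you would have to repair: ``the orbit is outside $Y$ at time $j$'' does not mean ``the $j$-th symbol avoids the elements meeting $Y$'' --- a point of $P^*\minus Y$ is outside $Y$ yet carries the symbol $P^*$, so your outside runs are not words in the restricted alphabet whose growth rate you are invoking.

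The genuine gap is your treatment of the inside runs: the claim that each $k_i$ is pinned by the outside data up to a bounded multiplicity $a_0$, which is what lets you discard the composition count of $k$. This claim is unproved and, I believe, false --- and the heuristic you offer cuts the wrong way. An unstable curve entering $Y$ transversally to $W^s_0$ contains subcurves that remain in $Y$ for arbitrarily many iterates (the closer to $W^s_0$, the longer), and precisely because all of them exit near $W^u_{\text{loc}}(0)\cap\partial Y$, as you observe, they are compatible with one and the same subsequent outside itinerary. Hence for a fixed tuple of outside words the number of realizable $(k_1,\dots,k_p)$ with $\sum k_i=k$ is comparable to the number of compositions of $k$ into $p$ parts, not $O(a_0^{\,p})$. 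The paper keeps this factor, $\binom{k-1}{p-1}$, and the substantive content of the lemma is the estimate $\binom{k-1}{p-1}\binom{N-k-1}{p}(r+1)^p p^2\le e^{\eps_0(N-k)}$ --- with exponent proportional to $N-k$, not $k$ --- obtained from $p+1\le(N-k)/Q$ with $Q$ large. Without either your multiplicity claim or such an estimate, the $k$-dependence in your count does not cancel and the asserted bound $C_8\,p^{-2}e^{(h+\eps_0)(N-k)}$ does not follow.
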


\begin{proof}
The proof of Lemma \ref{lemcard} is similar to \cite[Lemma 8.1]{PSS}. We consider the shift space that models the system $(\Lambda_g,g)$. Call $P^*$ the element of the Markov partition containing the ball $B(0,r_1)$. Observe that $\#\S_{k,N,p}$ is bounded above by the number of words of length $N$ that start and end at $P$ and contain exactly $k$ symbols $P^*$. With the previous notation we see one of these words in Figure 1.

\begin{figure}[h]
\centering
\tikzset{every picture/.style={line width=0.75pt}} 

\begin{tikzpicture}[x=0.5pt,y=0.5pt,yscale=-1,xscale=1]

\draw   (7,129.33) -- (657,129.33) -- (657,157.33) -- (7,157.33) -- cycle ;
\draw    (24,129.44) -- (24.11,157.44) ;
\draw    (94.61,129.67) -- (94.72,157.67) ;
\draw    (139.39,129.44) -- (139.5,157.44) ;
\draw    (222.33,129.67) -- (222.44,157.67) ;
\draw    (269.83,129.17) -- (269.94,157.17) ;
\draw    (638.28,129.61) -- (638.39,157.61) ;
\draw    (578,129.67) -- (578.11,157.67) ;
\draw    (522.5,129.67) -- (522.61,157.67) ;
\draw   (13,162) .. controls (13.05,166.67) and (15.4,168.98) .. (20.07,168.93) -- (41.07,168.72) .. controls (47.74,168.65) and (51.09,170.95) .. (51.14,175.62) .. controls (51.09,170.95) and (54.4,168.59) .. (61.07,168.52)(58.07,168.55) -- (82.07,168.31) .. controls (86.74,168.26) and (89.05,165.91) .. (89,161.24) ;
\draw   (100.56,161.44) .. controls (100.5,165.81) and (102.65,168.03) .. (107.02,168.08) -- (107.02,168.08) .. controls (113.27,168.17) and (116.36,170.39) .. (116.3,174.76) .. controls (116.36,170.39) and (119.51,168.25) .. (125.75,168.33)(122.94,168.29) -- (125.75,168.33) .. controls (130.12,168.39) and (132.33,166.23) .. (132.39,161.86) ;
\draw   (142.78,161.11) .. controls (142.83,165.78) and (145.18,168.09) .. (149.85,168.04) -- (170.85,167.83) .. controls (177.52,167.76) and (180.87,170.06) .. (180.92,174.73) .. controls (180.87,170.06) and (184.18,167.7) .. (190.85,167.63)(187.85,167.66) -- (211.85,167.42) .. controls (216.52,167.37) and (218.83,165.02) .. (218.78,160.36) ;
\draw   (227.67,160.56) .. controls (227.67,165.23) and (230,167.56) .. (234.67,167.56) -- (236.44,167.56) .. controls (243.11,167.56) and (246.44,169.89) .. (246.44,174.56) .. controls (246.44,169.89) and (249.77,167.56) .. (256.44,167.56)(253.44,167.56) -- (258.22,167.56) .. controls (262.89,167.56) and (265.22,165.23) .. (265.22,160.56) ;
\draw   (531.17,160.78) .. controls (531.17,165.45) and (533.5,167.78) .. (538.17,167.78) -- (539.94,167.78) .. controls (546.61,167.78) and (549.94,170.11) .. (549.94,174.78) .. controls (549.94,170.11) and (553.27,167.78) .. (559.94,167.78)(556.94,167.78) -- (561.72,167.78) .. controls (566.39,167.78) and (568.72,165.45) .. (568.72,160.78) ;
\draw   (582.22,161.1) .. controls (582.27,165.77) and (584.63,168.07) .. (589.3,168.02) -- (605.14,167.84) .. controls (611.81,167.76) and (615.17,170.05) .. (615.22,174.72) .. controls (615.17,170.05) and (618.47,167.68) .. (625.14,167.61)(622.14,167.64) -- (640.97,167.42) .. controls (645.64,167.37) and (647.94,165.01) .. (647.89,160.34) ;

\draw (6,134.72) node [anchor=north west][inner sep=0.75pt] [font=\small]   {$P$};
\draw (638.72,134.33) node [anchor=north west][inner sep=0.75pt]  [font=\small]  {$P$};
\draw (44.5,178.5) node [anchor=north west][inner sep=0.75pt]  [font=\scriptsize]  {$l_{0}$};
\draw (111.83,178.44) node [anchor=north west][inner sep=0.75pt]  [font=\scriptsize]  {$k_{1}$};
\draw (99.9,133.44) node [anchor=north west][inner sep=0.75pt]   {$P^{*}$};
\draw (175.39,178.22) node [anchor=north west][inner sep=0.75pt]  [font=\scriptsize]  {$l_{1}$};
\draw (231.22,133.44) node [anchor=north west][inner sep=0.75pt]  {$P^{*}$};
\draw (240.06,177.56) node [anchor=north west][inner sep=0.75pt]  [font=\scriptsize]  {$k_{2}$};
\draw (543.56,177.78) node [anchor=north west][inner sep=0.75pt]  [font=\scriptsize]  {$k_{p}$};
\draw (609.58,178.56) node [anchor=north west][inner sep=0.75pt]  [font=\scriptsize]  {$l_{p}$};
\draw (533.22,133.44) node [anchor=north west][inner sep=0.75pt]  {$P^{*}$};
\draw (368,135.44) node [anchor=north west][inner sep=0.75pt]  [font=\LARGE]  {$\cdots $};
\end{tikzpicture}
\caption{Trajectory of some $\sigma\in\S_{k,N,p}$.}
\end{figure}

Denote by $\LL$ the language on the shift space, and by $\LL_n$ the set of words of length $n$. Fixing the positions of the $l_i$'s and the $k_i$'s, the number of such words is bounded above by $\#\LL_{l_0}\#\LL_{l_1}\ldots\#\LL_{l_p}$. Since the shift space is topologically mixing, it satisfies the specification property, i.e. there is $r\in\N$ such that for every $v,w\in\LL$, there is $u\in\LL$ with $\abs{u}\leq r$ such that $vuw\in\LL$. It was proven in \cite[Lemma 1.2.2.1]{CT} that $\#\LL_m\#\LL_n\leq(r+1)\#\LL_{m+n+r}$. Using this property inductively, we have that
$$\prod_{j=0}^p\#\LL_{l_j}\leq(r+1)^p\#\LL_{N-k+r}.$$

By \cite[Corollary 1.9.12 and Proposition 3.2.5]{KH}, the number $\#\LL_{n-k+r}$ grows exponentially with an exponent less than or equal to $(N-k)h$ for some $0<h<h_{top}(g)$. Since $g$ and $f_0$ are topologically conjugate, the numbers $r$ and $h$ depend only on $f_0$. 
 
The number of different ways the iterates of some $\sigma\in\S_{k,N,p}$ can enter $Y$ exactly $p$ times and stay inside the ball exactly $k$ times is at most the number of ways one can write $k$ as the sum of $p$ natural numbers (the order matters), which equals $\binom{k-1}{p-1}$. On the other hand, the number of different ways the iterates of some $\sigma\in\S_{k,N,p}$ can spend outside $Y$ exactly $p+1$ times equals the number of of ways the number $N-k$ can be written as the sum of $p+1$ positive natural numbers, which is $\binom{N-k-1}{p}$. Then, there is a constant $C_8>0$ such that
$$\#\S_{k,N,p}\leq C_8\binom{k-1}{p-1}\binom{N-k-1}{p}(r+1)^pe^{h(N-k)}.$$

We claim that there is $0<\eps_0<h_{top}(g)-h$ such that $\binom{k-1}{p-1}<e^{\eps_0(N-k)}$. To this end, observe that by the choice of $Q$ (see Section \ref{prelim} and \eqref{choiceofQ}) we have that $p+1<(N-k)/Q$. Using this property, it can be shown (see \cite[Lemma 8.1]{PSS}) that
$$\binom{k-1}{p-1}\leq e^{\frac{N-k}{Q}\log(4)}\qquad\text{and}\qquad\binom{N-k-1}{p}\leq e^{\frac{N-k}{Q}\log(Qe)}.$$

Finally, observe that
$$p^2(r+1)^p=e^{2\log p+p\log(r+1)}\leq e^{2p+p\log(r+1)}<e^{\frac{(N-k)}{Q}(2+\log(r+1))}.$$

Now given any sufficiently small $\eps_0>0$, we can choose $Q>0$ large enough so that
$\frac{\log(4)+\log(Qe)+2+\log(r+1)}{Q}<\eps_0$. Thus, we conclude that
\begin{align*}
    \#\S_{k,N,p}&\leq C_8\binom{k-1}{p-1}\binom{N-k-1}{p}e^{h(N-k)}\\
    &\leq C_8\frac{1}{p^2}e^{(N-k)\frac{\log(4)+\log(Qe)+2+\log(r+1)}{Q}}e^{h(N-k)}\\
    &\leq C_8\frac{1}{p^2}e^{(h+\eps_0)(N-k)}.\qedhere
\end{align*}
\end{proof}

\begin{lem}\label{lemlognu}
For $f$ in a sufficiently small $C^1$-neighborhood of $f_0$, the expansion rate $\nu$ of $f$ satisfies $\log\nu > h$.
\end{lem}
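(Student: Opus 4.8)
The plan is to pin the exponent $h$ from Lemma~\ref{lemcard} strictly below $\log\lambda$, and then to note that the unstable expansion rate $\nu$ of $f$ is as close to $\lambda$ as we like once $f$ is $C^1$-close to $f_0$. Recall that $h$ depends only on $f_0$ and satisfies $0<h<h_{\text{top}}(g)$; thus the whole statement reduces to the two facts $h_{\text{top}}(g)\le\log\lambda$ and $\nu\to\lambda$ as $f\to f_0$.

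First I would identify the entropies. The map $g$ is topologically conjugate to $f$ via the homeomorphism from Section~\ref{prelim}, and $f$, being uniformly hyperbolic (A2) and $C^1$-close to $f_0$, is topologically conjugate to $f_0$ by structural stability of hyperbolic attractors. Since topological entropy is a conjugacy invariant and $NW(f)=\Lambda$, we get $h_{\text{top}}(g)=h_{\text{top}}(f_0)$. To bound the latter I would only use Ruelle's inequality, which is valid in class $C^1$: for every $f_0$-invariant measure $\mu$ one has $h_\mu(f_0)\le\int\sum_i\lambda_i^+\,d\mu$. By (A3) the unstable bundle of $f_0$ is one-dimensional and by (A5) it is expanded uniformly by $\lambda$, so the only positive Lyapunov exponent is the constant $\log\lambda$; hence $h_\mu(f_0)\le\log\lambda$ for every $\mu$, and the variational principle gives $h_{\text{top}}(f_0)\le\log\lambda$. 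Combining, $h<h_{\text{top}}(g)=h_{\text{top}}(f_0)\le\log\lambda$, so $h<\log\lambda$. (In fact (A5) gives SRB $=$ MME, whence Pesin's formula upgrades this to the equality $h_{\text{top}}(f_0)=\log\lambda$, but only the inequality is needed here.)

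Second I would use the continuity of the unstable expansion rate under $C^1$ perturbation. For a uniformly hyperbolic attractor the splitting $E^u\oplus E^s$ and the restriction of the derivative to $E^u$ persist and vary continuously with the map, so the optimal rate $\nu=\nu_f$ depends continuously on $f$ with $\nu_{f_0}=\lambda$; thus $\nu_f\to\lambda$ as $f\to f_0$ in the $C^1$ topology. Since $h<\log\lambda$ is a fixed number independent of $f$, for every $f$ in a sufficiently small $C^1$-neighborhood of $f_0$ we obtain $\nu_f>e^{h}$, that is $\log\nu>h$, which is the claim.

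The two ingredients — conjugacy invariance of entropy together with Ruelle's inequality, and persistence of the hyperbolic splitting — are standard, so the argument is short. The only point that needs care is that the whole chain of comparisons be valid \emph{uniformly} in $f$: the exponent $h$ and the entropy $h_{\text{top}}(g)=h_{\text{top}}(f_0)$ must be read off from $f_0$ alone, so that they stay fixed while $f$ varies, after which the inequality $\log\nu_f>h$ follows purely from the convergence $\nu_f\to\lambda$. I do not anticipate a genuine obstacle beyond organizing this dependence correctly.
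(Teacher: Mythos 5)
Your proof is correct and follows the same overall skeleton as the paper's: identify $h_{\text{top}}(g)=h_{\text{top}}(f)=h_{\text{top}}(f_0)$ by conjugacy, relate this quantity to $\log\lambda$, observe that $h$ is determined by $f_0$ alone, and conclude by continuity of $\nu$ under $C^1$ perturbation. The one place you genuinely diverge is the middle step: the paper invokes the SRB $=$ MME hypothesis from (A5) together with Pesin's entropy formula for the uniformly expanding $f_0$ to obtain the chain of equalities $\log\lambda=h_{\mu_0}(f_0)=h_{\text{top}}(f_0)=h_{\text{top}}(g)>h$, whereas you use only Ruelle's inequality and the variational principle to get the one-sided bound $h<h_{\text{top}}(g)=h_{\text{top}}(f_0)\le\log\lambda$. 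Since the lemma needs nothing more than the strict inequality $h<\log\lambda$ with $h$ a fixed quantity depending on $f_0$, your route is slightly more economical in hypotheses: it never uses the coincidence of the SRB measure with the measure of maximal entropy, only the uniform expansion rate $\lambda$ and the one-dimensionality of $E^u$ from (A3) and (A5). What the paper's version buys is the exact identification $h_{\text{top}}(g)=\log\lambda$, which is not needed for this lemma. Both arguments handle the uniformity issue in the same way, namely by noting that $h$ is read off from the symbolic model of $f_0$ and therefore stays fixed while $f$, and hence $\nu$, varies.
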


\begin{proof}
For $f$ in a small neighborhood of $f_0$, these maps are topologically conjugate, and hence they have the same topological entropy. Denote by $\mu_0$ the SRB measure for $f_0$, which is also its measure of maximal entropy by assumption. Since $\mu_0$ is ergodic and $f_0$ has uniform expansion, by Pesin entropy formula we have that
$$\log\lambda=\int\chi_{f_0}^+(x)d\mu_0(x)=h_{\mu_0}(f_0)=h_{top}(f_0)=h_{top}(f)=h_{top}(g)>h.$$

Now, recall that $h$ from Lemma \ref{lemcard} depends only on the symbolic representation of $g$, which depends only on $f_0$ as they are topologically conjugate. Hence, we can choose $f$ sufficiently $C^1$-close to $f_0$ so that $\log\nu>h$, as desired.
\end{proof}

\begin{lem}\label{finalLema}
There is $C_9>0$ such that 
$$\ell(\{x\in W:\tau(x)>n\})\leq C_9n^{-(\gamma_2-1)}.$$
\end{lem}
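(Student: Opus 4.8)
The plan is to estimate $\ell(\{x \in W : \tau(x) > n\})$ by decomposing this set into the connected components $\sigma \in \S_{k,N,p}$ with return time $N > n$, and summing the length bound we derived just before Lemma \ref{lemcard} against the cardinality estimate from that lemma. Recall that for each such $\sigma$ we established
$$\ell(\sigma) \leq C_7 e^{(\varepsilon_0 - \log\nu)(N-k)} k^{-\gamma_2},$$
and that $\#\S_{k,N,p} \leq C_8 p^{-2} e^{(h+\varepsilon_0)(N-k)}$. First I would combine these two bounds so that, upon summing over $\sigma \in \S_{k,N,p}$, the exponential factors merge into $e^{(h + 2\varepsilon_0 - \log\nu)(N-k)}$. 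Here the crucial input is Lemma \ref{lemlognu}: since $\log\nu > h$, I can choose $\varepsilon_0 > 0$ small enough (which the earlier construction already permits, as $\varepsilon_0 < h_{\text{top}}(g) - h$) so that $h + 2\varepsilon_0 - \log\nu < 0$. Denote this negative exponent's magnitude by $\theta := \log\nu - h - 2\varepsilon_0 > 0$, so each $\S_{k,N,p}$ contributes at most $C_7 C_8 p^{-2} k^{-\gamma_2} e^{-\theta(N-k)}$.

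Next I would organize the total sum as
$$\ell(\{x \in W : \tau(x) > n\}) \leq \sum_{N > n} \sum_{k} \sum_{p} C_7 C_8 \, p^{-2} k^{-\gamma_2} e^{-\theta(N-k)}.$$
The sum over $p$ is harmless: $\sum_p p^{-2}$ converges to a constant, absorbing into the prefactor. The remaining double sum runs over $N > n$ and over admissible $k$ with $1 \leq k \leq N$; writing $j = N - k \geq 0$, I reindex to sum over $k$ and $j$ subject to $k + j > n$. The exponential $e^{-\theta j}$ controls the $j$-sum. The heart of the matter is the polynomial tail coming from $k^{-\gamma_2}$: for each fixed $j$, the constraint $k > n - j$ forces $\sum_{k > n-j} k^{-\gamma_2} \leq C\,(n-j)^{-(\gamma_2 - 1)}$ when $n - j \geq 1$ (this is the standard comparison of a tail of a $p$-series with an integral, valid since $\gamma_2 = 1 + \tfrac{1}{\alpha} > 1$), and is bounded by the full convergent sum $\sum_k k^{-\gamma_2}$ when $j \geq n$.

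The main obstacle—really the only delicate point—is handling the interplay between the two summation variables so that the polynomial rate $n^{-(\gamma_2 - 1)}$ survives while the exponential weight $e^{-\theta j}$ keeps the series convergent. My plan is to split the $j$-sum at $j = n/2$. For the range $0 \leq j \leq n/2$ we have $n - j \geq n/2$, so the $k$-tail is bounded by $C(n/2)^{-(\gamma_2-1)}$, and $\sum_{j \geq 0} e^{-\theta j}$ converges, giving a total contribution of order $n^{-(\gamma_2 - 1)}$. For the range $j > n/2$, the factor $e^{-\theta j} \leq e^{-\theta n/2}$ decays exponentially in $n$, which dominates any polynomial rate, so multiplying by the bounded sum $\sum_k k^{-\gamma_2}$ yields a contribution that is negligible compared to $n^{-(\gamma_2 - 1)}$. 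Adding the two ranges produces the claimed bound $\ell(\{x \in W : \tau(x) > n\}) \leq C_9 n^{-(\gamma_2 - 1)}$, with $C_9$ collecting all the accumulated constants.
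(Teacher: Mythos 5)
Your proposal is correct and uses exactly the same ingredients as the paper: the decomposition of $\{\tau>n\}$ into the classes $\mathcal{S}_{k,N,p}$, the per-component length bound $\ell(\sigma)\leq C_7e^{(\varepsilon_0-\log\nu)(N-k)}k^{-\gamma_2}$, the counting bound of Lemma \ref{lemcard}, and Lemma \ref{lemlognu} to make the combined exponent $h+2\varepsilon_0-\log\nu$ negative. The only divergence is in the final bookkeeping: the paper first fixes $N$, bounds $\ell(\tau=N)\leq Ce^{N\chi}\sum_{k=1}^{N}e^{-\chi k}k^{-\gamma_2}$, and invokes the Stolz--Cesaro theorem to get $\ell(\tau=N)\leq C_{10}N^{-\gamma_2}$ before summing over $N>n$; you instead reindex the full double sum by $j=N-k$ and split at $j=n/2$, letting the exponential weight $e^{-\theta j}$ absorb the large-$j$ range and the $p$-series tail estimate $\sum_{k>n-j}k^{-\gamma_2}\lesssim(n-j)^{-(\gamma_2-1)}$ handle the rest. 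Both evaluations are valid and elementary; yours bypasses Stolz--Cesaro and produces the tail bound directly, at the cost of not recording the intermediate estimate $\ell(\tau=N)\lesssim N^{-\gamma_2}$, which the paper does not reuse anyway.
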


\begin{proof}
Observe that
$$\ell(\{x\in W:\tau(x)=N\})\leq\sum_{k=1}^N\sum_{p=1}^k\max_{A\in\mathcal{S}_{k,N,p}}\{\ell(A)\}\#\mathcal{S}_{k,N,p}.$$

Then using Lemma \ref{lemcard} we have
\begin{align*}
    \ell(\tau=N)&\leq\sum_{k=1}^N\sum_{p=1}^kC_7e^{(\varepsilon_0-\log\nu)(N-k)}k^{-\gamma_2}C_8\frac{1}{p^2}e^{(h+\varepsilon_0)(N-k)}\\
    &\leq C_7C_8\frac{\pi^2}{6}e^{N\chi}\sum_{k=1}^Ne^{-\chi k}k^{-\gamma_2},
\end{align*}
where $\chi = 2\varepsilon_0-\log\nu+h<0$ (by Lemma \ref{lemlognu}, we can choose $\eps_0$ small enough so that $\chi <0$).

We now use the following criterion for proving the convergence of a sequence.

\begin{prop}\label{stolz}
(Stolz-Cesaro Theorem) Let $(a_n)$, $(b_n)$ be two sequences of real numbers such that $(b_n)$ is strictly monotone and divergent, and the limit
$$\lim_{n\to\infty}\frac{a_{n+1}-a_n}{b_{n+1}-b_n}=L$$
exists. Then, 
$$\lim_{n\to\infty}\frac{a_n}{b_n}=L.$$
\end{prop}

Set $u_k=e^{-\chi k}k^{-\gamma_2}$. Let $a_N=\sum_{k=1}^N(u_{k+1}-u_k)$ and $b_N=\sum_{k=1}^Nu_k$. Observe that $b_N$ is increasing and diverges, and also we have
$$a_{N+1}-a_N=u_{N+2}-u_{N+1}\sim u_{N+1}=b_{N+1}-b_N.$$

Then, we have that 
$$\sum_{k=1}^Nu_k=b_N\sim a_N=\sum_{k=1}^N(u_{k+1}-u_k)=u_{N+1}-u_1\sim e^{-\chi N}N^{-\gamma_2}.$$

Therefore, there is $C_{10}>0$ such that
$$\ell(\tau=N)\leq C_{10}N^{-\gamma_2},$$

and hence there is $C_9>0$ such that
\begin{equation*}
\ell(\{x\in W:\tau(x)>n\})=\sum_{N=n+1}^{\infty}\ell(\tau=N)\leq C_9 n^{-\gamma_2 +1}.\qedhere
\end{equation*}
\end{proof}

\subsection{A lower bound for the tail of the return time} In order to prove the lower bound for the tail of the return time, we need the following Lemma (see \cite[Lemma 7.1]{PSS}). Let $\mathcal{N}=\{n\in\N:\text{there is }x\in P\text{ with }\tau(x)=n\}$.

\begin{lem}\label{Lema7.1PSS} There is an integer $Q_1>1$ such that for any $N>0$ there is $n>N$ with $n\in\mathcal{N}$, an $s$-subset $P^s_l$ with $\tau(P^s_l)=n$ and numbers $0<m_1<m_2$ satisfying $m_1<Q_1$, $n-m_2<Q_1$ such that
\begin{itemize}
    \item $g^k(P^s_l)\cap Y=\vacio$ for $0\leq k<m_1$ or $m_2<k\leq n$; and
    \item $g^k(P^s_l)\cap Y\neq\vacio$ for $m_1\leq k\leq m_2$.
\end{itemize}    
\end{lem}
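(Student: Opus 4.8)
The statement asserts the existence of arbitrarily long return times $n$ whose corresponding $s$-subset trajectory enters the slow-down ball $Y$ only once (as a single contiguous block between times $m_1$ and $m_2$) and spends only boundedly many iterates outside $Y$ at the beginning and the end. The plan is to construct such a trajectory by hand: take the fixed point $p$ at the center of the slow-down region and exploit that trajectories passing close to $p$ linger in $Y$ for a time that grows without bound as they approach $p$, while the portions before entering and after exiting $Y$ can be controlled by the Markov partition.

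First I would work in the symbolic model of $(\Lambda_g, g)$ afforded by the Markov partition $\PP$, with $P^*$ the partition element containing $B(0,r_1)$ and $P$ the base of the Young structure chosen (as in Section \ref{prelim}) to be disjoint from $V$ and to intersect $W_0^s$. The key mechanism is that an admissible word which starts at $P$, reaches $P^*$, then repeats the self-loop at $P^*$ (the fixed point $p$ gives such a loop on the symbolic side, since $p\in P^*$ is fixed) a large number of times, then returns to $P$, corresponds to a genuine point $x\in P$ whose orbit enters $Y$ once and stays there for a long stretch. By the topological mixing / specification property already invoked in Lemma \ref{lemcard}, there is a bounded-length connecting word $u$ (with $\abs{u}\le r$) joining the block at $P^*$ back to $P$; this is what forces $m_1<Q_1$ and $n-m_2<Q_1$ for a constant $Q_1$ depending only on the specification constant $r$ and the fixed path into $P^*$. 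I would set $Q_1 := r+c$ for a suitable fixed constant $c$ absorbing the initial and terminal connecting segments.

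Next I would extract from this admissible word an actual $s$-subset $P^s_l$ with the prescribed return time. Concretely, the cylinder determined by the constructed word defines a $u$-subset of points of $P$ all returning to $P$ for the first time at the same step $n$; intersecting with the stable family $\Gamma^s$ as in condition (C1) gives an $s$-subset $P^s_l$ with $\tau(P^s_l)=n$. By choosing the number of self-loops at $P^*$ large, I can make $n$ exceed any prescribed $N$, and indeed arrange that the contiguous block of indices $m_1 \le k \le m_2$ with $g^k(P^s_l)\cap Y \ne \vacio$ has length $m_2-m_1$ growing linearly in the loop count, while $m_1$ and $n-m_2$ remain below $Q_1$. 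The two bulleted conditions are then exactly the statements that the orbit is outside $Y$ for the short head $0\le k<m_1$ and short tail $m_2<k\le n$, and inside $Y$ on the long middle block.

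The main obstacle I expect is ensuring that the long middle block is a single uninterrupted visit to $Y$ rather than several visits separated by excursions outside — i.e. that the symbolic block genuinely stays at $P^*$ (equivalently that the geometric orbit does not leave and re-enter $B(0,r_1)$). This is where I would rely on the flow structure near $p$: since $Y=B(0,r_1)$ is a ball around the fixed point and the slow-down flow is the genuine flow $\varphi_t$ of $\dot x = \psi(\norm{x})Ax$ there, a trajectory that enters near the stable manifold of $p$ passes monotonically toward $p$ and then out along the unstable direction, entering and leaving $Y$ exactly once; the monotonicity of $\tan\theta$ established in \cite[Lemma 7.1]{CDP} (used already in Lemma \ref{newineq}) guarantees no spurious re-entry. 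Matching this geometric single-pass picture with the symbolic self-loop, and verifying that the specification-provided head and tail connectors do not themselves dip into $Y$ (which can be arranged by choosing the connecting words to stay in partition elements disjoint from $P^*$, possible since $P$ itself avoids $V$), is the delicate bookkeeping step.
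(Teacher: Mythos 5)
Your overall strategy --- realize the lemma symbolically as an admissible word $PW_1\claus{P^*}W_2P$ with a long block of the self-loop symbol $P^*$ coming from the fixed point, flanked by boundedly short connectors --- is the same as the paper's, and your geometric observation that a trajectory crosses $Y=B(0,r_1)$ in a single contiguous stretch is correct. The gap is in how you produce the connectors. The specification property only asserts the \emph{existence} of some gluing word of length at most $r$; it gives no control over which symbols appear in it. You therefore cannot rule out that $W_1$ or $W_2$ sends some iterate of $P^s_l$ back into $Y$, which destroys the contiguity demanded by the two bullets (and which is precisely what Lemma \ref{polynlower} needs in order to apply \eqref{expslowdown} across a single entry/exit pair $m_1,m_2$). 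Your proposed remedy --- ``choosing the connecting words to stay in partition elements disjoint from $P^*$, possible since $P$ itself avoids $V$'' --- is a non sequitur: the hypothesis on $P$ says nothing about the gluing word, and the subshift obtained by forbidding the symbol $P^*$ need not remain transitive, so there is no a priori admissible path from $P$ to the entrance of $Y$ avoiding $P^*$. A secondary instance of the same problem: specification does not prevent $W_1$ or $W_2$ from containing the symbol $P$ itself, in which case $n$ would not be the \emph{first} return time and $\tau(P^s_l)=n$ would fail; truncating at the extremal occurrences of $P$ repairs this but reintroduces the previous issue.

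The paper closes this gap by constructing the connectors geometrically rather than via specification: $W_1$ is the itinerary of the ``first intersection'' $\gamma^s=V^s(x_0)$ of $W^s_0$ with $P$ up to the minimal time $n_0$ at which $g^{n_0}(\gamma^s)\subset Y$, and $W_2$ is the analogous itinerary of the first intersection $\gamma^u$ of $W^u_0$ with $P$ under $g^{-1}$. Minimality of these entrance/exit times is what keeps the head and tail out of $Y$ and away from $P$, and their lengths are two fixed finite numbers, which is what furnishes $Q_1$; the arbitrarily large $n\in\mathcal{N}$ then come from $s$-subsets accumulating on $B=W^s_0\cap P$, which never returns to $P$. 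This is also the step where the standing choice that $P$ intersects $W^s_0$ (which you correctly record but never use) actually enters. Replacing your specification step by this stable/unstable-manifold construction turns your sketch into the paper's proof.
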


\begin{proof}
Observe that it is enough to show that there is $Q_1>0$ such that for any $N>0$ we can find an admissible word of length $n>N$ with $n\in\mathcal{N}$ of the form
$$PW_1\claus{P^{*}}W_2P,$$
where the words $W_i$ have length $\abs{W_i}<Q_1$ and do not contain the symbol $P^*$, and the word $\claus{P^*}$ consists of the symbol $P^*$ repeated $n-2-\abs{W_1}-\abs{W_2}$ times.

    Now we need to make a choice on the element of the Markov partition we use as base of the Young structure. Let $z\in W^s_0\minus V$. Choose the element $P$ of the Markov partition for $g$ such that contains $z$ and does not intersect $V$ (see condition (A4) and \cite[Proof of Theorem 4.2]{Z}). Now set $B:=W^s_0\cap P$ and for $x\in B$, define
    $$n(x):=\min\{k\geq 1:g^k(V^s(x))\subset Y\}.$$

    Let $\mathcal{M}:=\{n(x):x\in B\}$ and choose $n_0:=\min\mathcal{M}$. Then there is $x_0\in B$ with $n(x_0)=n_0$. Choose $\gamma^s=V^s(x_0)$ the ``first intersection" (see \cite[Lemma 7.1]{PSS}) between $W^s_0$ and $P$. Similarly, let $\gamma^u$ be the first intersection between $W^u_0$ and $P$. The argument now follows the same line as in \cite{PSS}. The sets $\gamma^s$ and $\gamma^u$ enter $Y$ after finitely many iterations of the maps $g$ and $g^{-1}$ respectively. This way we are able to produce the words
    $$PW_1P^*\quad\text{and}\quad P^*W_2P,$$
    where the lengths $\abs{W_i}$ are bounded above by some $Q_1>1$. Since the symbol $P^*$ can follow itself, we are able to produce a word of the form 
    $$PW_1\claus{P^*}W_2P$$
    of arbitrary large length $n$. Since the word starts and ends at $P$, such $n$ must belong to $\mathcal{N}$. It is important to notice that since $B$ never returns to $P$, there will be sets $P^s_i$ accumulating near $B$ for arbitrarily large $n$. This completes the proof of the lemma.
\end{proof}

\begin{lem}\label{polynlower}
    There is $C_{11}>0$ such that
    $$\ell(\{x\in W:\tau(x)>n\})\geq C_{11}n^{-(\gamma_1-1)}.$$
\end{lem}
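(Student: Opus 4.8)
The plan is to establish a polynomial lower bound on the tail of the return time by exhibiting a controlled family of $s$-subsets with large inducing times and bounding their leaf measure from below. The key input is Lemma~\ref{Lema7.1PSS}, which produces, for arbitrarily large $n$, an $s$-subset $P^s_l$ with $\tau(P^s_l)=n$ whose trajectory stays outside the slow-down ball $Y$ except on a window $[m_1,m_2]$ of length $n-O(1)$ spent inside $Y$. The strategy mirrors the upper-bound argument of \cref{finalLema}, but run in reverse: instead of bounding $\ell(\sigma)$ from above using the lower expansion estimate in \eqref{expslowdown}, I will bound it from below using the \emph{upper} expansion estimate $\ell(g^m(\sigma))\leq C_6(m-n)^{\gamma_1}\ell(g^n(\sigma))$ together with the uniform upper bound \eqref{unifupper} outside the ball.

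First I would take $\sigma=W\cap P^s_l$, the slice of the selected $s$-subset along the fixed unstable leaf $W$, so that $g^n(\sigma)$ is a full unstable curve in $P$ and hence $\ell(g^n(\sigma))\geq m_0$ for some uniform $m_0>0$ (the minimal length of a full unstable disk). Running the trajectory backward from time $n$, the pieces spent outside $Y$ contribute at most a factor $\xi^{n-(m_2-m_1)}=\xi^{O(1)}$ by \eqref{unifupper}, since only $m_1+(n-m_2)<2Q_1$ iterates lie outside the window. The single passage through $Y$, from entry time $m_1$ to exit time $m_2$, contributes at most the polynomial factor $C_6(m_2-m_1)^{\gamma_1}\leq C_6 n^{\gamma_1}$ by the upper half of \eqref{expslowdown}. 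Combining these, $\ell(g^n(\sigma))\leq \xi^{2Q_1}C_6 n^{\gamma_1}\ell(\sigma)$, which after rearranging yields $\ell(\sigma)\geq c\,n^{-\gamma_1}$ for a uniform constant $c>0$.

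Finally I would sum over $N>n$. Since $\{x\in W:\tau(x)>n\}=\bigsqcup_{N>n}\{\tau=N\}\cap W$ and each level set $\{\tau=N\}\cap W$ contains at least the slice of the $s$-subset supplied by Lemma~\ref{Lema7.1PSS} (available for infinitely many $N$, with a uniform lower bound $c\,N^{-\gamma_1}$ on its length), I obtain
$$\ell(\{x\in W:\tau(x)>n\})\geq\sum_{N>n}c\,N^{-\gamma_1}\geq C_{11}n^{-(\gamma_1-1)},$$
where the last step is the standard tail comparison $\sum_{N>n}N^{-\gamma_1}\gtrsim n^{-(\gamma_1-1)}$ valid for $\gamma_1>1$, realized via the Stolz--Ces\`aro argument of \cref{stolz} or direct integral comparison.

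The main obstacle I anticipate is ensuring that Lemma~\ref{Lema7.1PSS} genuinely supplies \emph{infinitely many} admissible values $N$ with the required geometry, and that the associated slices $W\cap P^s_l$ are nonempty and disjoint across distinct $N$, so the sum over $N$ is legitimate; this rests on the remark in the proof of \cref{Lema7.1PSS} that sets $P^s_i$ accumulate near $B$ for arbitrarily large $n$. A secondary subtlety is that the chosen $s$-subset's unstable slice along $W$ may a priori be short even though $g^n(\sigma)$ is full; this is precisely what forces the argument to propagate the length estimate forward through the single ball passage rather than estimating $\ell(\sigma)$ directly, and it is why the upper expansion bound $\gamma_1$ (not the lower exponent $\gamma_2$) governs the final rate.
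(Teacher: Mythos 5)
Your proposal is correct and follows essentially the same route as the paper: both take the slice $W\cap P^s_l$ of the $s$-subset supplied by Lemma~\ref{Lema7.1PSS}, bound its length from below by pushing the uniform lower bound $\inf_{V\in\Gamma^u}\ell(V)$ on the full returned curve backward through the $O(Q_1)$ iterates outside $Y$ via \eqref{unifupper} and through the single ball passage via the upper half of \eqref{expslowdown}, obtaining $\ell(W\cap P^s_l)\gtrsim N^{-\gamma_1}$, and then sum the tail over $N>n$. The concerns you flag (availability of such subsets for all large $N$, shortness of the unstable slice) are handled implicitly in the paper exactly as you describe.
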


\begin{proof}

Denote by $A_l:=P^s_l\cap W$, where $P^s_l$ is given by the previous lemma. Observe that
\begin{align*}
    \ell(\{x\in W:\tau(x)>n\})&=\sum_{N=n+1}^{\infty}\ell(\{x\in W:\tau(x)=N\})\\
    &=\sum_{N=n+1}^{\infty}\sum_{\{i:\tau(P^s_i)=N\}}\ell(P^s_i\cap W)\\
    &\geq\sum_{N=n+1}^{\infty}\ell(A_l).
\end{align*}

Set $I:=\inf_{V\in\Gamma^u}\ell(V)>0$. By Lemma \ref{Lema7.1PSS} (notice that now $\tau(A_l)=N$), using \eqref{unifupper} and \eqref{expslowdown} we have that
\begin{align*}
    I &\leq \ell(g^N(A_l))\leq \xi^{N-m_2-1}\ell(g^{m_2+1}(A_l))\\
    &\leq\xi^{Q_1}C_6(m_2+1-m_1)^{\gamma_1}\ell(g^{m_1}(A_l))
    \leq\xi^{2Q_1}C_6N^{\gamma_1}\ell(A_l).
\end{align*}
Then, there is $C_{11}>0$ such that
\begin{equation*}
\ell(\{x\in W:\tau(x)>n\})\geq I\xi^{-2Q_1}C_6^{-1}\sum_{N=n+1}^{\infty}N^{-\gamma_1}\geq C_{11}n^{-(\gamma_1-1)}.\qedhere
\end{equation*}
\end{proof}

Now we proceed to prove the main results Theorem \ref{mainresult1} and Theorem \ref{mainresult2}. In order to apply Proposition \ref{bestprop}, we need to verify the assumptions. The facts that $\gcd\{\tau\}=1$ and the inequality \eqref{assum1} were shown in \cite[Proofs of Theorems A and B]{Z}.

\begin{proof}[Proof of Theorem \ref{mainresult1}]
The inequality \eqref{assum2} follows from \eqref{largovseta} and Lemma \ref{finalLema} with $K_2=C_2C_9$ and $\delta=\gamma_2-1=1/\alpha>1$. Theorem \ref{mainresult1} follows from Proposition \ref{bestprop} with $s_1=\delta-1=\frac{1}{\alpha}-1>0$.
\end{proof}

\begin{proof}[Proof of Theorem \ref{mainresult2}] Since $\alpha\in(0,1/2)$ (hence $\delta>2$), by Proposition \ref{bestprop} we have that for every $h_1,h_2\in\GG(\YY)$ with $\int h_1\dmu\int h_2\dmu>0$,
$$C_n^{\mu}(h_1,h_2)=\sum_{N=n+1}^{\infty}\eta(\tau>N)\int h_1\dmu\int h_2\dmu+O(1/n^{\gamma_2-1}).$$

It follows from \eqref{largovseta} and Lemma \ref{polynlower} that 
$$\abs{C^{\mu}_n(h_1,h_2)}\geq D_1\sum_{N=n+1}^{\infty}\dfrac{1}{n^{\gamma_1-1}}-\frac{D_2}{n^{\gamma_2-1}}\geq\frac{D_3}{n^{\gamma_1-2}}-\frac{D_2}{n^{\gamma_2-1}}.$$

Observe that we need $0<\gamma_1-2<\gamma_2-1$. The first inequality follows from
$$\gamma_1=\gamma(\alpha+1)\left(\frac{2}{\alpha(\beta-\gamma)}+\frac{2^{\alpha/2}}{\alpha\gamma}\right)=\underbrace{\left(1+\frac{1}{\alpha}\right)}_{\geq 3}\underbrace{\left(\frac{2\gamma}{\beta-\gamma}+2^{\alpha/2}\right)}_{\geq 1}.$$

The second inequality is equivalent to $\gamma_1<\gamma_2+1$, which is equivalent to 
$$(\alpha+1)\left(\frac{2\gamma}{\beta-\gamma}+2^{\alpha/2}\right)<2\alpha+1.$$

If we fix $\gamma>0$ and let $\beta\to\infty$, the left hand side converges to $(\alpha+1)2^{\alpha/2}$, which is strictly less than $2\alpha+1$ for $\alpha\in(0,1/2)$. Thus, there is $\beta_0>\gamma$ such that for every $\beta>\beta_0$ we have $\gamma_1-2<\gamma_2-1$. We conclude that if $\beta>\beta_0$, there is some $D_4>0$ such that
$$\abs{C_{n}^{\mu}(h_1,h_2)}\geq\frac{D_4}{n^{\gamma_1-2}},$$

and the desired result follows with $s_2=\gamma_1-2>0$.\qedhere
\end{proof}


\begin{thebibliography}{KLP2}

\bibitem[\href{https://bookstore.ams.org/view?ProductCode=GSM/148}{BP1}]{BP1} \textsc{L. Barreira, Y. Pesin}. \textit{Introduction to smooth ergodic theory}. Vol. 148. American Mathematical Soc., 2013.

\bigskip

\bibitem[\href{https://www.cambridge.org/core/books/nonuniform-hyperbolicity/985F811D19CABAE5D0DF0C59BFBFFF18}{BP2}]{BP2} \textsc{L. Barreira, Y. Pesin}. \textit{Nonuniform hyperbolicity: dynamics of systems with nonzero Lyapunov exponents}. Vol 115. Cambridge: Cambridge University Press, 2007.

\bigskip

\MRbibitem[BY]{MR1755436}{BY} \textsc{M. Benedicks, L.-S. Young.} \textit{Markov extensions and decay of correlations for certain H\'enon maps.} Ast\'erisque, 261 (2000): 13--56.

\bigskip

\MRbibitem[Bo]{MR442989}{Bowen} \textsc{R. Bowen.} \textit{Equilibrium states and the ergodic theory of Anosov diffeomorphisms.} Lecture notes in mathematics 470 (1975): 11--25.

\bigskip

\MRbibitem[Br]{627783}{B} \textsc{M. Brin.} \textit{Bernoulli diffeomorphisms with $n-1$ nonzero exponents.} Ergodic Theory and Dynamical Systems 1, no. 1 (1981): 1--7.

\bigskip

\MRbibitem[BFK]{604045}{BFK} \textsc{M. Brin, J. Feldman, A. Katok.} \textit{Bernoulli diffeomorphisms and group extensions of dynamical systems with nonzero characteristic exponents.} Annals of Mathematics (1981): 159--179.

\bigskip

\MRbibitem[C]{MR2041259}{C} \textsc{A. A. de Castro J\'unior.} \textit{Fast mixing for attractors with a mostly contracting central direction.} Ergodic Theory and Dynamical Systems 24, no. 1 (2004): 17--44.

\bigskip

\MRbibitem[CDP]{3535895}{CDP} \textsc{V. Climenhaga, D. Dolgopyat, Y. Pesin}. \textit{Non-stationary non-uniform hyperbolicity: SRB measures for dissipative maps}. Comm. Math. Phys., 346: 553--602. https://doi.org/10.1007/s00220-016-2710-z, 2016.

\bigskip

\MRbibitem[CT]{4436821}{CT}\textsc{V. Climenhaga, D. Thompson}. \textit{Beyond Bowen's specification property}. In Thermodynamic Formalism: CIRM Jean-Morlet Chair, Fall 2019, pp. 3--82. Springer International Publishing, 2021.

\bigskip

\MRbibitem[DP]{1898798}{DP} \textsc{D. Dolgopyat, Y. Pesin}. \textit{Every compact manifold carries a completely hyperbolic diffeomorphism}. Ergodic Theory and Dynamical Systems 22, no. 2 (2002): 409--435.

\bigskip

\MRbibitem[G]{2041223}{G} \textsc{S. Gou\"ezel}. \textit{Sharp polynomial estimates for the decay of correlations.} Israel Journal of Mathematics 139, no. 1 (2004): 29--65. 

\bigskip

\MRbibitem[H]{MR2501893}{H} \textsc{J. Hatomoto.} \textit{Decay of correlations for some partially hyperbolic diffeomorphisms.} Hokkaido Mathematical Journal 38, no. 1 (2009): 39--65.

\bigskip

\MRbibitem[K]{554383}{K} \textsc{A. Katok}. \textit{Bernoulli diffeomorphisms on surfaces.} Annals of Mathematics 110, no. 3 (1979): 529--547.

\bigskip

\bibitem[\href{https://www.cambridge.org/core/books/introduction-to-the-modern-theory-of-dynamical-systems/2D6CF65297378C2704A4A56D0F77B503}{KH}]{KH} \textsc{A. Katok, B. Hasselblatt}. \textit{Introduction to the modern theory of dynamical systems}. No 54. Cambridge University Press, 1997.

\bigskip

\bibitem[\href{https://arxiv.org/abs/2003.08503}{PSS}]{PSS} \textsc{Y. Pesin, S. Senti, F. Shahidi}. 
\textit{Area preserving surface diffeomorphisms with polynomial decay of correlations are ubiquitous.} 
arXiv preprint arXiv:2003.08503 (2020).

\bigskip

\MRbibitem[PSZ1]{3551580}{PSZ1} \textsc{Y. Pesin, S. Senti, K. Zhang}. 
\textit{Thermodynamics of towers of hyperbolic type}. Transactions of the American Mathematical Society 368, no. 12 (2016): 8519--8552.

\bigskip

\MRbibitem[PSZ2]{4252211}{PSZ} \textsc{Y. Pesin, S. Senti, K. Zhang}. \textit{Thermodynamics of the Katok map}. Ergodic Theory and Dynamical Systems 39, no. 3 (2019): 764--794.

\bigskip

\MRbibitem[R]{MR415683}{Ruelle} \textsc{D. Ruelle.} \textit{A measure associated with axiom-A attractors.} American Journal of Mathematics (1976): 619--654. 

\bigskip

\MRbibitem[S]{1946554}{S} \textsc{O. Sarig}. \textit{Subexponential decay of correlations.} Inventiones Mathematicae 150, no. 3 (2002): 629--653.

\bigskip

\MRbibitem[SZ]{3968859}{SZ} \textsc{F. Shahidi, A. Zelerowicz.} 
\textit{Thermodynamics via inducing}. Journal of Statistical Physics 175, no. 2 (2019): 351--383.

\bigskip

\bibitem[\href{https://www.mathnet.ru/php/archive.phtml?wshow=paper&jrnid=rm&paperid=5083&option_lang=eng}{Si}]{Sinai} \textsc{Y. G. Sinai.} \textit{Gibbs measures in ergodic theory.} Russian Mathematical Surveys 27, no. 4 (1972): 21.

\bigskip

\MRbibitem[Z]{3670000}{Z} \textsc{A. Zelerowicz.} 
\textit{Thermodynamics of some non-uniformly hyperbolic attractors.}
Nonlinearity 30, no.7 (2017): 2612.

\end{thebibliography}
\end{document}